\documentclass[oneside,english]{amsart}
\usepackage[T1]{fontenc}
\usepackage[latin9]{inputenc}
\usepackage{units}
\usepackage{amstext}
\usepackage{amsthm}
\usepackage{amssymb}
\usepackage{esint}

\makeatletter
\numberwithin{equation}{section}
\numberwithin{figure}{section}
  \theoremstyle{plain}
  \newtheorem*{thm*}{\protect\theoremname}
  \theoremstyle{remark}
  \newtheorem*{rem*}{\protect\remarkname}
\theoremstyle{plain}
\newtheorem{thm}{\protect\theoremname}
 \theoremstyle{definition}
 \newtheorem*{defn*}{\protect\definitionname}
  \theoremstyle{plain}
  \newtheorem{lem}[thm]{\protect\lemmaname}
  \theoremstyle{plain}
  \newtheorem{cor}[thm]{\protect\corollaryname}

\usepackage{dsfont}

\makeatother

\usepackage{babel}
  \providecommand{\corollaryname}{Corollary}
  \providecommand{\definitionname}{Definition}
  \providecommand{\lemmaname}{Lemma}
  \providecommand{\remarkname}{Remark}
  \providecommand{\theoremname}{Theorem}
\providecommand{\theoremname}{Theorem}

\begin{document}
\global\long\def\norm#1{\left\Vert #1\right\Vert }
\global\long\def\bbT{\mathbb{T}}
\global\long\def\bbZ{\mathbb{Z}}
\global\long\def\bbN{\mathbb{N}}
\global\long\def\bbS{\mathcal{S}}
\global\long\def\eps{\varepsilon}
\global\long\def\floor#1{\left\lfloor #1\right\rfloor }
\global\long\def\ceiling#1{\left\lceil #1\right\rceil }
\global\long\def\bbR{\mathbb{R}}

\title{Optimal arithmetic structure in exponential Riesz sequences}

\author{Itay Londner}
\begin{abstract}
We consider exponential systems $E\left(\Lambda\right)=\left\{ e^{i\lambda t}\right\} _{\lambda\in\Lambda}$
for $\Lambda\subset\bbZ$. It has been previously shown by Londner
and Olevskii in \cite{LonOle} that there exists a subset of the circle,
of positive Lebesgue measure, so that every set $\Lambda$ which contains,
for arbitrarily large $N$, an arithmetic progressions of length $N$
and step $\ell=O\left(N^{\alpha}\right)$, $\alpha<1$, cannot be
a Riesz sequence in the $L^{2}$ space over that set. On the other
hand, every set admits a Riesz sequence containing arbitrarily long
arithmetic progressions of length $N$ and step $\ell=O\left(N\right)$.

In this paper we show that every set $\mathcal{S}\subset\mathbb{T}$
of positive measure belongs to a unique class, defined through the
optimal growth rate of the step of arithmetic progressions with respect
to the length that can be found in Riesz sequences in the space $L^{2}\left(\mathcal{S}\right)$.
We also give a partial geometric description of each class.
\end{abstract}

\thanks{The author would like to thank Alexander Olevskii for suggesting
this project, and to Izabella \L aba, Malabika Pramanik and Josh Zahl
for numerous useful conversations on the subject of this paper. The author would also like to express his gratitude to the anonymous referees of this paper for their thorough remarks and suggestions.}

\address{Department of Mathematics, University of British Columbia, Vancouver,
BC, Canada V6T 1Z2}

\email{itayl@math.ubc.ca}

\subjclass[2000]{Primary 42C15; Secondary 42A38}

\keywords{Riesz sequences, Arithmetic progressions}
\maketitle

\section{Introduction}

\subsection{Preliminaries}

Let $\bbS\subset\bbR$ be a bounded set of positive Lebesgue measure
($\left|\bbS\right|>0$), and $\Lambda\subset\bbR$ a uniformly discrete
set. The exponential system $E\left(\Lambda\right)=\left\{ e^{i\lambda t}\right\} _{\lambda\in\Lambda}$
is called a \emph{Riesz sequence} in $L^{2}\left(\bbS\right)$ (we
denote $\Lambda\in RS\left(\bbS\right)$) if there exist positive
constants $A,B$ such that 
\begin{equation}
A\sum_{\Lambda}\left|a_{\lambda}\right|^{2}\leq\norm{\sum_{\Lambda}a_{\lambda}e^{i\lambda t}}_{L^{2}\left(\bbS\right)}^{2}\leq B\sum_{\Lambda}\left|a_{\lambda}\right|^{2}\label{eq:RS}
\end{equation}
for every finite sequence of coefficients $\left\{ a_{\lambda}\right\} $. 

Any constant satisfying the left inequality in $\left(\ref{eq:RS}\right)$
will be called a \emph{lower Riesz bound} for $E\left(\Lambda\right)$
in $L^{2}\left(\bbS\right)$.
%An \emph{upper Riesz bound} for $E\left(\Lambda\right)$ in $L^{2}\left(\bbS\right)$ is defined respectively.

The problem of determining the exact relationship between sets of
frequencies and their corresponding spectra, in general, is very difficult. Heuristically, it can be stated
that for a fixed spectrum $\mathcal{S}$, the more dense a set of
points is, the harder it becomes to verify that it is a Riesz sequence.
Analogously, for a fixed set of frequencies $\Lambda$, the more complicated
the structure of the spectrum, and the smaller its measure, the less
likely it is for $E\left(\Lambda\right)$ to be a Riesz sequence for
this spectrum. In past years, a large body of work has been dedicated
to the analysis of the interplay between $\bbS$ and $\Lambda$ (see,
for instance, \cite{Chr,OleUla,You} and the references therein).
Below we survey some of the most relevant results.

Throughout this paper, unless stated otherwise, we will always assume
$\bbS$ is a subset of the circle group $\bbT=\bbR/2\pi\bbZ$, which will be identified with the interval $\left[0,2\pi\right)$, and
$\Lambda\subset\bbZ$. Also, in various places, $c$ and $C$ will
denote absolute constants which might be different from one another,
even within the same line.

\subsection{Density}

The case where $\bbS=I$ is an interval is classical. In order to
verify that $E\left(\Lambda\right)$ is indeed a Riesz sequence in
$L^{2}\left(I\right)$, one essentially needs to know the upper Beurling
density of $\Lambda$. 
\begin{thm*}[Kahane, \cite{Kah}]
 Let $\Lambda\subset\bbR.$ If 
\[
D^{+}\left(\Lambda\right):=\lim_{r\rightarrow\infty}\max_{a\in\bbR}\frac{\#\left(\Lambda\cap\left(a,a+r\right)\right)}{r}<\frac{\left|I\right|}{2\pi}
\]
then $\Lambda\in RS\left(I\right)$, while if $D^{+}\left(\Lambda\right)>\frac{\left|I\right|}{2\pi}$
then it is not.
\end{thm*}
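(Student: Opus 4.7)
The plan is to reformulate the Riesz sequence property as an interpolation question in a Paley--Wiener space, reducing the dichotomy to two classical density theorems. Without loss of generality translate so that $I=[-a,a]$ with $a=|I|/2$, and let $PW_{a}$ denote the Hilbert space of entire functions of exponential type at most $a$ whose restriction to $\bbR$ lies in $L^{2}(\bbR)$. The Fourier transform identifies $L^{2}(I)$ with $PW_{a}$, and under this identification $E(\Lambda)$ being a Riesz sequence in $L^{2}(I)$ is equivalent, by duality, to $\Lambda$ being a set of \emph{interpolation} for $PW_{a}$, i.e.\ the restriction map $f\mapsto(f(\lambda))_{\lambda\in\Lambda}$ is a bounded surjection from $PW_{a}$ onto $\ell^{2}(\Lambda)$.

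For sufficiency, assume $D^{+}(\Lambda)<a/\pi$. Since $\Lambda$ is already uniformly discrete by hypothesis, I would pick $\beta$ with $D^{+}(\Lambda)<\beta<a/\pi$ and invoke Beurling's interpolation theorem to construct, for each $\lambda_{0}\in\Lambda$, an entire function $\varphi_{\lambda_{0}}$ of exponential type $\pi\beta$ satisfying $\varphi_{\lambda_{0}}(\lambda)=\delta_{\lambda\lambda_{0}}$ together with the uniform square-sum bound $\sum_{\lambda_{0}\in\Lambda}|\varphi_{\lambda_{0}}(t)|^{2}\leq C$ for all $t\in\bbR$. Because $\pi\beta<a$, each $\varphi_{\lambda_{0}}$ lies in $PW_{a}$, and their inverse Fourier transforms form a system in $L^{2}(I)$ biorthogonal to $E(\Lambda)$ with uniformly bounded norms. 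This biorthogonal system produces the lower Riesz bound, while the upper bound follows from uniform discreteness via a standard Plancherel--Polya estimate.

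For necessity, suppose $D^{+}(\Lambda)>a/\pi$ and select intervals $J_{n}\subset\bbR$ with $|J_{n}|\to\infty$ and $N_{n}:=\#(\Lambda\cap J_{n})\geq\alpha|J_{n}|$ for some fixed $\alpha>a/\pi$. After modulation by $e^{-ic_{n}t}$, where $c_{n}$ is the center of $J_{n}$, one may assume the frequencies sit symmetrically about the origin, and such modulation preserves the Riesz bounds. The key input is Landau's analysis of the time--frequency concentration operator $P_{I}Q_{J_{n}}P_{I}$ on $L^{2}(\bbR)$: the number of its eigenvalues exceeding $1/2$ is $|I|\cdot|J_{n}|/(2\pi)+o(|J_{n}|)$, which is strictly less than $N_{n}$ for $n$ large. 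Since the span in $L^{2}(I)$ of $\{e^{i\lambda t}\}_{\lambda\in\Lambda\cap J_{n}}$ has algebraic dimension $N_{n}$ but only $|I|\cdot|J_{n}|/(2\pi)+o(|J_{n}|)$ directions carry a uniform amount of $L^{2}$--mass, the smallest eigenvalue of the associated Gram matrix must tend to $0$, contradicting the lower Riesz bound.

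The main obstacle is the necessity direction, which genuinely requires the sharp asymptotic count of eigenvalues of the concentration operator---a nontrivial spectral fact due to Landau. By contrast, sufficiency is a routine application of Beurling's multiplier technique, the only subtlety being the $\eps$-loss in choosing $\beta$ strictly between $D^{+}(\Lambda)$ and $a/\pi$, which accounts for the strict inequalities in both halves of the statement.
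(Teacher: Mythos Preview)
The paper does not prove this theorem. Kahane's result is stated in the preliminaries section as classical background and is simply attributed to \cite{Kah}; no argument is given in the paper, so there is no ``paper's own proof'' to compare against.

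As for your sketch itself: the overall strategy of passing to the Paley--Wiener space and treating the two directions via Beurling's interpolation theorem and Landau's eigenvalue asymptotics is standard and, at the level of an outline, sound. Two remarks are worth making. First, your sufficiency argument is somewhat circular in that ``Beurling's interpolation theorem'' (the statement that uniformly discrete $\Lambda$ with $D^{+}(\Lambda)<a/\pi$ is interpolating for $PW_{a}$) is essentially the same theorem you are trying to prove, just phrased on the transform side; invoking it amounts to citing the result rather than proving it. A self-contained argument would need to supply the actual construction of the biorthogonal system, e.g.\ via an infinite product of Weierstrass type or via Beurling's original multiplier method. Second, in the necessity direction, the link between the Gram matrix of $\{e^{i\lambda t}\}_{\lambda\in\Lambda\cap J_{n}}$ in $L^{2}(I)$ and the spectrum of the concentration operator $P_{I}Q_{J_{n}}P_{I}$ on $L^{2}(\bbR)$ needs one more sentence: the Gram matrix is a compression of that operator (or of the sinc kernel), and one has to argue that the $N_{n}$-dimensional span cannot avoid the small-eigenvalue subspace. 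This is exactly Landau's argument in \cite{Lan}, so again you are really citing the companion theorem that the paper states immediately afterwards.
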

For arbitrary sets $\mathcal{S}$ of positive (and finite) measure,
the situation is much more complicated and only a necessary condition
exists. 
\begin{thm*}[Landau, \cite{Lan}]
 Let $\Lambda\subset\bbR.$ If $\Lambda\in RS\left(\bbS\right)$
then $D^{+}\left(\Lambda\right)\leq\frac{\left|\bbS\right|}{2\pi}$.
\end{thm*}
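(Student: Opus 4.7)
The plan is to prove the density bound by a time-frequency concentration argument in the Landau-Pollak-Slepian tradition: compare the trace of a suitable localization operator (which is exactly $r|\bbS|/(2\pi)$) with the number of its eigenvalues bounded below (which the Riesz hypothesis forces to be at least $N(I):=\#(\Lambda\cap I)$, up to lower-order corrections).

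I would begin by constructing the operator $T_I:=M_{\mathbf{1}_\bbS}\,P_{PW_I}\,M_{\mathbf{1}_\bbS}$ on $L^2(\bbR)$, where $M_{\mathbf{1}_\bbS}$ is multiplication by $\mathbf{1}_\bbS$ and $P_{PW_I}$ is the orthogonal projection onto the Paley-Wiener space of functions Fourier-supported in $I=[a,a+r]$. Its integral kernel is
\[
K(t,s)=\mathbf{1}_\bbS(t)\,\mathbf{1}_\bbS(s)\,\tfrac{1}{2\pi}\int_I e^{i\xi(t-s)}\,d\xi,
\]
yielding $\mathrm{tr}(T_I)=r\,|\bbS|/(2\pi)$ and $0\leq T_I\leq\mathrm{Id}$. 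By Landau's eigenvalue asymptotics for such concentration operators, for any $\theta\in(0,1)$ one has $\#\{j:\mu_j(T_I)>\theta\}=r|\bbS|/(2\pi)+o(r)$ as $r\to\infty$, with only $O(\log r)$ eigenvalues in the transition region between $0$ and $1$.

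Next, I would leverage the Riesz hypothesis to produce an $N(I)$-dimensional subspace of $L^2(\bbR)$ on which $T_{I^+}$ is bounded below by a fixed constant $c=c(A,B)>0$, where $I^+$ is $I$ enlarged by a slowly growing margin. Since the pure exponentials $e^{i\lambda t}$ are not themselves in $L^2(\bbR)$, I would regularize each one by convolving its Fourier transform with a narrow bump supported inside $I^+$, producing $f_\lambda\in PW_{I^+}$ whose restriction to $\bbS$ is a small perturbation of $e^{i\lambda t}|_\bbS$. The lower Riesz bound $A$ for $E(\Lambda\cap I)$ on $L^2(\bbS)$ then transfers to a uniform lower bound $\langle T_{I^+}f,f\rangle\geq c\norm{f}^2$ on the span of $\{f_\lambda\}_{\lambda\in\Lambda\cap I}$, modulo a boundary-layer error affecting only $o(r)$ of the vectors.

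Courant-Fischer applied to $T_{I^+}$ then furnishes at least $N(I)-o(r)$ eigenvalues exceeding $c$. Choosing any threshold $\theta<c$ in the spectral asymptotic for $T_{I^+}$ (whose window length is $r+o(r)$) gives $N(I)\leq r|\bbS|/(2\pi)+o(r)$, uniformly in the position $a$, hence $D^+(\Lambda)\leq|\bbS|/(2\pi)$ upon dividing by $r$ and letting $r\to\infty$. The main obstacle is the transfer step: one must turn the Riesz lower bound on a discrete set of exponentials into a quantitative lower bound for the continuous concentration operator, without degrading the constant or losing control near the boundary of the window. Landau's plunge-region estimate is precisely what allows the boundary contributions to be absorbed into the $o(r)$ term.
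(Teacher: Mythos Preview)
The paper does not supply a proof of this statement: Landau's theorem is quoted in the introduction as classical background, with a citation to \cite{Lan}, and no argument is given. So there is no ``paper's own proof'' to compare against.

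That said, your sketch is a faithful outline of the original Landau argument: build the time--frequency concentration operator $T_I$, compute its trace, invoke the eigenvalue asymptotics (the ``plunge region'' has only $o(r)$ eigenvalues), and use the Riesz lower bound to produce a subspace of dimension $\#(\Lambda\cap I)$ on which $T_{I^+}$ is uniformly bounded below, then finish with the min--max principle. The regularization step (replacing $e^{i\lambda t}$ by $e^{i\lambda t}\widehat{\psi}(t)$ for a narrow bump $\psi$) is exactly how one passes from the discrete Riesz inequality on $L^2(\bbS)$ to a lower bound for the continuous operator, and your identification of this transfer as the delicate point is correct. One small caution: the Riesz constants in this paper refer to $\Lambda\subset\bbZ$ acting on $L^2(\bbS)$ with $\bbS\subset\bbT$, whereas your operator lives on $L^2(\bbR)$; the reduction from the periodic to the line setting (or, equivalently, working directly with the finite-dimensional analogue via truncated Toeplitz matrices) must be made explicit if you write this out in full.
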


\subsection{Simultaneous Riesz sequences}

With regards to the Riesz sequence property it is only natural to
ask whether there exists a set $\Lambda$ such that $\Lambda\in RS\left(\bbS\right)$
simultaneously for all subsets $\bbS\subset\bbT$ of positive measure.
We call such sequences \emph{simultaneous Riesz sequences.} By Landau's
necessary condition, it is clear that such sets must have zero upper
density. 

Zygmund essentially proved the following
\begin{thm*}[\cite{Zyg}]
 Let $\Lambda=\left\{ \lambda_{j}\right\} \subset\bbZ$ be a Hadamard
lacunary sequence, i.e. for some $q>1$ we have 
\[
\nicefrac{\lambda_{j+1}}{\lambda_{j}}\geq q>1\qquad\forall j,
\]
then $\Lambda$ is a simultaneous Riesz sequence.
\end{thm*}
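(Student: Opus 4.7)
The upper Riesz bound will be immediate: since $\bbS\subset\bbT$, we have $\norm{\sum_{j}a_{j}e^{i\lambda_{j}t}}_{L^{2}(\bbS)}^{2}\leq\norm{\sum_{j}a_{j}e^{i\lambda_{j}t}}_{L^{2}(\bbT)}^{2}=2\pi\sum_{j}|a_{j}|^{2}$, so $B=2\pi$ works. For the lower bound I will fix $\bbS$ of positive measure and argue by contradiction: if no positive lower Riesz bound existed, I could extract finite trigonometric polynomials $f_{n}=\sum_{j}a_{j}^{(n)}e^{i\lambda_{j}t}$ with $\sum_{j}|a_{j}^{(n)}|^{2}=1$ and $\norm{f_{n}}_{L^{2}(\bbS)}\to 0$.

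By weak compactness in $\ell^{2}$ I would pass to a subsequence so that $a^{(n)}\rightharpoonup a$. The weak limit $f=\sum_{j}a_{j}e^{i\lambda_{j}t}\in L^{2}(\bbT)$ satisfies $\norm{f}_{L^{2}(\bbS)}=0$ by lower semicontinuity. Hadamard lacunarity forces all $\lambda_{j}$ to share a common sign, so after conjugation I may assume $\lambda_{j}>0$; then $f$ is the boundary value of an $H^{2}$ function on the disk, and the F.\ and M.\ Riesz theorem forces $f\equiv 0$, i.e.\ $a=0$. A standard diagonal extraction, using that $\sum_{j\leq N}|a_{j}^{(n)}|^{2}\to 0$ for each fixed $N$, next lets me replace $f_{n}$ by polynomials supported on indices $j>N_{n}$ with $N_{n}\to\infty$, while preserving $\sum_{j}|a_{j}^{(n)}|^{2}=1$ and $\norm{f_{n}}_{L^{2}(\bbS)}\to 0$.

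For such $f_{n}$ I would write
\[
|f_{n}(t)|^{2}=\sum_{j}|a_{j}^{(n)}|^{2}+R_{n}(t),\qquad R_{n}(t)=\sum_{j\neq k}a_{j}^{(n)}\overline{a_{k}^{(n)}}\,e^{i(\lambda_{j}-\lambda_{k})t}.
\]
The Fourier coefficient $\widehat{R_{n}}(m)$ equals $\sum_{(j,k):\lambda_{j}-\lambda_{k}=m}a_{j}^{(n)}\overline{a_{k}^{(n)}}$; Hadamard lacunarity ensures that for each fixed $m\neq 0$ this sum involves only finitely many pairs $(j,k)$, all from a bounded index range, so the support condition yields $\widehat{R_{n}}(m)=0$ for $n$ large. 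Zygmund's classical $L^{4}$ inequality gives $\norm{f_{n}}_{L^{4}(\bbT)}\leq K(q)\norm{f_{n}}_{L^{2}(\bbT)}$ and hence uniform boundedness $\norm{R_{n}}_{L^{2}(\bbT)}\leq C(q)$; a bounded sequence in $L^{2}(\bbT)$ with pointwise-vanishing Fourier coefficients converges weakly to zero, so $\int_{\bbS}R_{n}=\langle R_{n},\chi_{\bbS}\rangle\to 0$. But then
\[
\norm{f_{n}}_{L^{2}(\bbS)}^{2}=|\bbS|\cdot\sum_{j}|a_{j}^{(n)}|^{2}+\int_{\bbS}R_{n}\longrightarrow|\bbS|>0,
\]
contradicting $\norm{f_{n}}_{L^{2}(\bbS)}\to 0$.

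The principal obstacle I expect is the uniqueness step eliminating a nonzero weak limit: without it the compactness scheme would immediately produce a nontrivial element in the kernel of the restriction map $a\mapsto(\sum a_{j}e^{i\lambda_{j}t})|_{\bbS}$. Hadamard lacunarity saves us by forcing one-sided spectral support and routing the argument through $H^{2}$, but any attempt to avoid $H^{2}$ will encounter genuine difficulty. The remaining quantitative input, Zygmund's $L^{4}$ inequality, plays only an auxiliary role: it upgrades the pointwise vanishing of $\widehat{R_{n}}$ into weak $L^{2}(\bbT)$ convergence, and hence into vanishing of $\int_{\bbS}R_{n}$.
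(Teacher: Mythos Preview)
The paper does not prove this theorem; it is quoted as background with a citation to Zygmund's book, so there is no in-paper argument to compare against.

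Your argument is sound. The upper bound is trivial, and for the lower bound your compactness scheme works: weak convergence of coefficients gives weak $L^{2}(\bbT)$ convergence of the polynomials, hence weak $L^{2}(\bbS)$ convergence, and the norm is weakly lower semicontinuous; the $H^{2}$ uniqueness theorem then kills the limit. The drift-to-infinity reduction is legitimate, and the claim that each nonzero $m$ arises as $\lambda_{j}-\lambda_{k}$ for only a bounded index range follows from $\lambda_{j}-\lambda_{k}\geq(1-q^{-1})\lambda_{j}$. Zygmund's $L^{4}$ inequality gives the needed $L^{2}$ bound on $R_{n}$, so $R_{n}\rightharpoonup 0$ and the contradiction follows.

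That said, your route is considerably more elaborate than Zygmund's classical argument. The traditional proof avoids compactness and $H^{2}$ theory entirely: one shows directly, via an iterated Cauchy--Schwarz or Riesz-product argument, that all $L^{p}$ norms ($0<p<\infty$) of a lacunary polynomial are comparable, and then a Paley--Zygmund / H\"older interpolation yields a quantitative lower bound $\norm{f}_{L^{2}(\bbS)}^{2}\geq c(q,|\bbS|)\norm{f}_{L^{2}(\bbT)}^{2}$ for every $\bbS$ of positive measure. That approach gives an explicit constant depending only on $q$ and $|\bbS|$, whereas your contradiction argument is non-quantitative. On the other hand, your scheme isolates cleanly the two roles lacunarity plays---analyticity of the spectrum (feeding $H^{2}$ uniqueness) and sparseness of the difference set (feeding the vanishing of $\widehat{R_{n}}$)---which is conceptually illuminating even if less efficient.
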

Later, this result has been further generalized (see \cite{Mih}).

\subsection{Riesz sequences with positive density}

Since a Riesz sequence cannot be too dense, one may ask whether a given
set $\bbS$ admits a Riesz sequence $E\left(\Lambda\right)$ such
that $\Lambda$ has positive density. This question may be considered
under various notions of density. Bourgain and Tzafriri, as a consequence
of their \textquotedblleft restricted invertibility\textquotedblright{}
theorem, answered this question positively proving 
\begin{thm*}[\cite{BouTz}]
 Every set $\bbS\subset\bbT$ of positive measure admits a Riesz
sequence $E\left(\Lambda\right)$, $\Lambda\subset\bbZ$, with positive
asymptotic density

\[
\text{dens}\left(\Lambda\right):=\lim_{r\rightarrow\infty}\frac{\#\left(\Lambda\cap\left(-r,r\right)\right)}{2r}>C\left|\bbS\right|,
\]
where $C$ is an absolute positive constant, independent of $\bbS$.
\end{thm*}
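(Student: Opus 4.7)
The plan is to reduce the existence of an infinite Riesz sequence with positive density to the finite-dimensional restricted invertibility phenomenon, which is precisely what Bourgain and Tzafriri's theorem supplies, and then pass to the limit.

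First I would set up the finite model. For each integer $N$, consider the analysis/synthesis operator
\[
T_{N}:\ell^{2}\left(\left\{ -N,\dots,N\right\} \right)\to L^{2}\left(\bbS\right),\qquad T_{N}\left(\left(a_{n}\right)\right)=\sum_{n=-N}^{N}a_{n}e^{int}\Big|_{\bbS}.
\]
With the normalization $\norm f_{L^{2}\left(\bbT\right)}^{2}=\frac{1}{2\pi}\int\left|f\right|^{2}$, each coordinate vector satisfies $\norm{T_{N}e_{n}}^{2}=\left|\bbS\right|/\left(2\pi\right)$, while the operator norm is bounded by the obvious estimate $\norm{T_{N}}\leq\norm{T_{N}^{\mathbb{T}}}=1$, where $T_{N}^{\bbT}$ denotes the full-circle version (which is an isometry). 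The goal is thus to select a large coordinate subset on which $T_{N}$ is uniformly bounded below.

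Next I would invoke the restricted invertibility theorem of Bourgain and Tzafriri as a black box: applied to the renormalized operator $\widetilde{T}_{N}=\left(2\pi/\left|\bbS\right|\right)^{1/2}T_{N}$, whose columns have unit norm and whose operator norm is $\leq\left(2\pi/\left|\bbS\right|\right)^{1/2}$, it yields a subset $\Lambda_{N}\subset\left\{ -N,\dots,N\right\} $ with
\[
\#\Lambda_{N}\geq c\,\frac{2N+1}{\norm{\widetilde{T}_{N}}^{2}}\geq c'\left(2N+1\right)\left|\bbS\right|
\]
such that $\widetilde{T}_{N}\big|_{\ell^{2}\left(\Lambda_{N}\right)}$ is bounded below by an absolute constant. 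Translating back, this says that $E\left(\Lambda_{N}\right)$ is a Riesz sequence in $L^{2}\left(\bbS\right)$ with both bounds uniform in $N$ (the upper bound is automatic from $\norm{T_{N}}\leq1$; the lower bound is $\sim\left|\bbS\right|$).

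Finally, I would extract an infinite set by a diagonal/compactness argument. The Riesz inequality with coefficients restricted to $\Lambda_{N}\cap\left[-M,M\right]$ is inherited for every fixed $M\leq N$; moreover, the finite sets $\Lambda_{N}$ (possibly after a harmless translation chosen to keep density concentrated) can be passed to a pointwise limit along a subsequence to produce $\Lambda\subset\bbZ$ with $\mathrm{dens}\left(\Lambda\right)\geq c'\left|\bbS\right|$. Since the Riesz sequence property for integer frequencies is determined on finite subsets and the bounds are uniform, $E\left(\Lambda\right)\in RS\left(\bbS\right)$.

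The main obstacle is of course the restricted invertibility step: everything else is bookkeeping (normalizations, uniformity of constants, and the routine diagonal extraction). Restricted invertibility is the deep ingredient, and without it one would be forced to attempt a random-selection argument in which one selects each integer independently with probability $\sim\left|\bbS\right|$ and tries to control the resulting Gramian, which is substantially more delicate because the off-diagonal Fourier coefficients $\widehat{\mathbf{1}_{\bbS}}\left(m-n\right)$ need not be small. Bourgain--Tzafriri's theorem is precisely the tool that bypasses this obstruction.
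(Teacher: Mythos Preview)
The paper does not prove this theorem; it is quoted from \cite{BouTz} as background, with only the remark that it is a consequence of the restricted invertibility theorem. Your sketch follows exactly that route---restricted invertibility as the black box, then a finite-to-infinite extraction---so there is nothing to compare against beyond noting that you and the paper point to the same source.

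On the merits of the sketch itself, the reduction to restricted invertibility and the uniform Riesz bounds are correct. The one place that is not quite ``bookkeeping'' is the density claim in the limit. A pointwise (diagonal) limit of sets $\Lambda_{N}\subset\{-N,\dots,N\}$ each of size $\geq c'(2N+1)\left|\bbS\right|$ need not have positive asymptotic density: all of $\Lambda_{N}$ could sit near the endpoints $\pm N$, and the limit could be empty. Your parenthetical ``harmless translation'' is the right fix, but it is where the actual work in this step lies: one uses an averaging/pigeonhole argument to recenter each $\Lambda_{N}$ so that its density on \emph{every} window $[-M,M]$ with $M\leq M_{N}$ (for some $M_{N}\to\infty$) stays above a fixed fraction of $c'\left|\bbS\right|$, and only then does the diagonal limit inherit positive lower density. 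Also note that this yields positive \emph{lower} density; getting an honest limit $\text{dens}(\Lambda)$ as written in the statement requires either a further regularization or simply reading the conclusion as $D^{-}(\Lambda)>C\left|\bbS\right|$, which is how the result is typically used.
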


\subsection{Syndetic Riesz sequences}

An even stronger notion is the following; we say that a set $\Lambda$
is syndetic if $\Lambda+\left\{ 0,1,\ldots,d\right\} =\bbZ$, for
some $d\in\bbN$. 

Given a set $\bbS$, Lawton proved (\cite{Law}) that the existence
of a syndetic Riesz sequence is equivalent to the Feichtinger conjecture
for exponentials. The Feichtinger conjecture in its general form has
been proved by Casazza et al (\cite{Cas}) to be equivalent to the
Kadison-Singer problem. The latter has been solved recently by Marcus,
Speilman and Srivastava (\cite{MSS}), and so the existence of syndetic
Riesz sequences holds unconditionally. 

It should be mentioned that the aforementioned solution was used in
\cite{BowLon} to prove the existence of a syndetic Riesz sequence
with a sharp asymptotic bound on the the quantity $d$.

\subsection{Riesz sequences and arithmetic progressions}

In this paper we restrict our attention to the situation in which
the set of frequencies $\Lambda\subset\bbZ$ contains arbitrarily
long arithmetic progressions. More accurately, suppose that for some
increasing sequence $N_{j}$, one can find $\ell_{j}\in\bbN$ and
$M_{j}\in\bbZ$ such that 
\[
M_{j}+\left\{ \ell_{j},2\ell_{j},\ldots,N_{j}\ell_{j}\right\} \subset\Lambda\qquad\forall j
\]
What can be said about the different sets $\bbS$ for which $E\left(\Lambda\right)$
is, or is not a Riesz sequence in $L^{2}\left(\bbS\right)$?
\begin{rem*}
From hereon, given an arithmetic progression, we will denote by $N$
its length, and its step by $\ell$.
\end{rem*}
Since every set $\bbS$ admits a Riesz sequence of positive density,
in particular, it follows from Szemeredi's theorem (\cite{Sze}) that
this set contains arbitrarily long arithmetic progressions. We emphasize
that the existence of Riesz sequences containing arbitrarily long
arithmetic progressions may also be proved independently, we leave
it as an exercise for the curious reader.

On the other hand Miheev proved
\begin{thm*}[\cite{Mih}]
 Given a set $\Lambda$ which contains arbitrarily long arithmetic
progressions, there exists a set $\bbS$ of positive measure such
that $\Lambda\notin RS\left(\bbS\right)$, i.e. $\Lambda$ cannot
be a simultaneous Riesz sequence. 
\end{thm*}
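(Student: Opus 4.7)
The plan is to construct $\bbS$ as the complement (in $\bbT$) of a union of small neighborhoods $U_{j}$, one for each long progression in $\Lambda$, chosen so that a suitable test polynomial with frequencies in $\Lambda$ has $L^{2}\left(\bbS\right)$-mass negligible compared with its $\ell^{2}$-coefficient mass; this forces the lower bound in $\left(\ref{eq:RS}\right)$ to fail. Given $P_{j}=M_{j}+\left\{ \ell_{j},2\ell_{j},\ldots,N_{j}\ell_{j}\right\} \subset\Lambda$ with $N_{j}\to\infty$, I would use the natural Dirichlet-type test polynomial
\[
f_{j}\left(t\right)=\sum_{k=1}^{N_{j}}e^{i\left(M_{j}+k\ell_{j}\right)t},\qquad\left|f_{j}\left(t\right)\right|^{2}=\frac{\sin^{2}\left(N_{j}\ell_{j}t/2\right)}{\sin^{2}\left(\ell_{j}t/2\right)},
\]
whose mass concentrates on $\bbT$ at the $\ell_{j}$ points $\left\{ 2\pi m/\ell_{j}:\,m=0,\ldots,\ell_{j}-1\right\} $.

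Let $U_{j}$ be the union of the open arcs of radius $r_{j}$ around these points, and set $\bbS=\bbT\setminus\bigcup_{j}U_{j}$. On $\bbT\setminus U_{j}$ the elementary inequality $\left|\sin x\right|\geq\left(2/\pi\right)\mathrm{dist}\left(x,\pi\bbZ\right)$ yields $\left|\sin\left(\ell_{j}t/2\right)\right|\geq\ell_{j}r_{j}/\pi$, hence $\left|f_{j}\right|^{2}\leq\pi^{2}/\left(\ell_{j}r_{j}\right)^{2}$ on $\bbS$. Since $\sum\left|a_{\lambda}\right|^{2}=N_{j}$, this gives
\[
\frac{\norm{f_{j}}_{L^{2}\left(\bbS\right)}^{2}}{\sum\left|a_{\lambda}\right|^{2}}\leq\frac{2\pi^{3}}{N_{j}\left(\ell_{j}r_{j}\right)^{2}},
\]
while the measure removed at stage $j$ is $\left|U_{j}\right|=2\ell_{j}r_{j}$. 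Taking for instance $r_{j}=N_{j}^{-1/4}/\ell_{j}$ makes $\left(\ell_{j}r_{j}\right)^{2}=N_{j}^{-1/2}$, so the displayed ratio is $O\left(N_{j}^{-1/2}\right)$ while $\left|U_{j}\right|\leq2N_{j}^{-1/4}$. Since no hypothesis has been made on the rate of growth of $N_{j}$, I may pass to a subsequence along which $\sum_{j}2N_{j}^{-1/4}<\pi$, guaranteeing $\left|\bbS\right|\geq\pi>0$ and ruling out any positive lower Riesz bound.

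The only delicate step is the simultaneous choice of the radii $r_{j}$: each $U_{j}$ must be wide enough to push $\left|f_{j}\right|^{2}$ below $o\left(N_{j}\right)$ uniformly on $\bbS$, yet narrow enough that $\sum_{j}\left|U_{j}\right|<2\pi$. The polynomial decay of the Dirichlet-type kernel away from its $\ell_{j}$ peaks leaves exactly enough slack to do both at once, and the remaining cost is absorbed by thinning the sequence of progressions used. Note that the argument is purely an $L^{\infty}$-estimate on $\bbS$; no extremal or density input is needed.
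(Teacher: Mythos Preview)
Your argument is correct. Note, however, that the paper does not give its own proof of this statement: it is quoted from Miheev \cite{Mih} as background. The closest thing to a proof in the paper is Lemma~5 (showing $\bbS_{\alpha}\notin\mathcal{A}(\beta)$ for $\beta<\alpha$), and your approach is essentially the same idea specialized to a $\Lambda$-dependent set: remove small arcs around the points $2\pi m/\ell_{j}$ and test against the Dirichlet-type polynomial on the progression.

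The one methodological difference is that the paper, after the change of variables $\tau=\ell t$, integrates the pointwise bound $\left|P_{N}(\tau)\right|^{2}\leq C/(N\sin^{2}(\tau/2))$ over $\bbT\setminus I_{\ell}$ to obtain $C/(N\delta(\ell))$, whereas you take the blunter $L^{\infty}$ bound $\left|f_{j}\right|^{2}\leq\pi^{2}/(\ell_{j}r_{j})^{2}$ on all of $\bbS$ and multiply by $2\pi$. This costs you one extra factor of $\ell_{j}r_{j}$ in the denominator, which is why you need $N_{j}(\ell_{j}r_{j})^{2}\to\infty$ rather than merely $N_{j}\ell_{j}r_{j}\to\infty$; but since Miheev's statement allows $\bbS$ to depend on $\Lambda$ and you are free to pass to a subsequence of progressions, the loss is harmless here. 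Your balancing choice $r_{j}=N_{j}^{-1/4}/\ell_{j}$ and the thinning step are both fine.
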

In case $\ell$ grows slowly with respect to $N$, one can choose
the set $\bbS$ independently of $\Lambda$. This question was considered
initially by Bownik and Speegle who gave a quantified version of this
result.
\begin{thm*}[\cite{BowSp}]
There exists a set $\bbS$ such that $E\left(\Lambda\right)$ is
not a Riesz sequence in $L^{2}\left(\bbS\right)$ whenever $\Lambda$
contains arithmetic progressions of length $N_{j}$ and step 
\[
\ell_{j}=o\left(N_{j}^{\nicefrac{1}{2}}\log^{-3}N_{j}\right)\qquad\left(N_{1}<N_{2}<\ldots\right).
\]
\end{thm*}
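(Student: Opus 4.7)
The strategy is to construct $\bbS$ explicitly by removing a small neighborhood of every rational point, and then exploit the fact that a simple Dirichlet-type polynomial in an AP concentrates its $L^{2}(\bbT)$ mass precisely on the points being removed. Concretely, pick a positive decreasing sequence $\delta_{q}$ (we will take $\delta_{q}=c/(q^{2}\log^{2}(q+2))$ for a small absolute constant $c$) and set
\[
\bbS\;=\;\bbT\setminus\bigcup_{q\geq1}\bigcup_{p=0}^{q-1}\bigl(\tfrac{2\pi p}{q}-\delta_{q},\,\tfrac{2\pi p}{q}+\delta_{q}\bigr).
\]
The total removed measure is at most $\sum_{q}2q\delta_{q}$, which is less than $2\pi$ for $c$ small, so $\left|\bbS\right|>0$. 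By construction, for every integer $\ell\geq1$ the set $\bbS$ misses the $\delta_{\ell}$-neighborhood of each of the $\ell$ points $2\pi k/\ell$, $k=0,\dots,\ell-1$.

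Given $\Lambda\supset M_{j}+\{\ell_{j},2\ell_{j},\dots,N_{j}\ell_{j}\}$ with $\ell_{j}=o(N_{j}^{1/2}\log^{-3}N_{j})$, take the test sequence $a_{\lambda}=1$ on the $j$-th AP and $0$ elsewhere; then $f_{j}(t)=e^{iM_{j}t}\sum_{k=1}^{N_{j}}e^{ik\ell_{j}t}$ satisfies $\sum_{\lambda}\left|a_{\lambda}\right|^{2}=N_{j}$ and $\left|f_{j}(t)\right|^{2}=\left|D_{N_{j}}(\ell_{j}t)\right|^{2}$, where $D_{N}$ is the Dirichlet kernel. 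Decompose $\bbT$ into the $\ell_{j}$ arcs of length $2\pi/\ell_{j}$ centered at the excluded points $2\pi k/\ell_{j}$; on each arc the $\delta_{\ell_{j}}$-neighborhood of the center is disjoint from $\bbS$, and the standard estimate $\left|D_{N}(u)\right|\leq\pi/\left|u\right|$ for $\left|u\right|\leq\pi$ together with a change of variables gives
\[
\int_{\delta_{\ell_{j}}\leq\left|t-2\pi k/\ell_{j}\right|\leq\pi/\ell_{j}}\left|D_{N_{j}}(\ell_{j}t)\right|^{2}\,dt\;\leq\;\frac{C}{\ell_{j}^{2}\delta_{\ell_{j}}}.
\]
Summing over the $\ell_{j}$ arcs yields $\norm{f_{j}}_{L^{2}(\bbS)}^{2}\leq C/(\ell_{j}\delta_{\ell_{j}})$.

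Dividing by $N_{j}$ and substituting $\delta_{\ell_{j}}\asymp 1/(\ell_{j}^{2}\log^{2}\ell_{j})$, the hypothesis on $\ell_{j}$ gives
\[
\frac{\norm{f_{j}}_{L^{2}(\bbS)}^{2}}{\sum_{\lambda}\left|a_{\lambda}\right|^{2}}\;\leq\;\frac{C\,\ell_{j}\log^{2}\ell_{j}}{N_{j}}\;\lesssim\;\frac{1}{\log N_{j}}\;\longrightarrow\;0,
\]
which violates the lower Riesz bound, so $\Lambda\notin RS(\bbS)$. The only delicate point is the calibration of $\delta_{q}$: it must decay fast enough that $\sum q\delta_{q}<\infty$ (so that $\left|\bbS\right|>0$), yet slowly enough that $N_{j}\ell_{j}\delta_{\ell_{j}}\to\infty$ under the assumed growth of $\ell_{j}$. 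The logarithmic factor $\log^{-3}N_{j}$ in the hypothesis is exactly the slack needed to make the critical choice $\delta_{q}\sim 1/(q^{2}\log^{2}q)$---which just barely keeps the harmonic-type sum $\sum q\delta_{q}$ convergent---compatible with the above estimate.
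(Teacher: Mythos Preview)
The paper does not actually prove this cited theorem of Bownik--Speegle; it merely states it as background. That said, your construction and Dirichlet-kernel estimate are exactly the template the paper uses in Section~3.1 (with $\delta(\ell)=c_{0}/\ell^{1/\alpha}$ in place of your $\delta_{q}\sim 1/(q^{2}\log^{2}q)$) to prove the sharper Lemma that $\bbS_{\alpha}\notin\mathcal{A}(\beta)$ for $\beta<\alpha$: remove small arcs around all rationals, reduce the integral over $\bbS$ to an integral over $\bbT\setminus I_{\ell}$ via the change of variables $\tau=\ell t$, and bound the tail of the normalized Dirichlet kernel by $C/(N\delta(\ell))$. Your argument is correct and essentially identical in spirit; the only difference is the calibration of $\delta_{q}$, which you have matched to the $N^{1/2}\log^{-3}N$ threshold rather than to $N^{\alpha}$.
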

Later, using a different approach, this was improved by Londner and
Olevskii.
\begin{thm}[\cite{LonOle}]
There exists a set $\bbS\subset\bbT$ such that if a set $\Lambda\subset\bbZ$
contains arithmetic progressions of length $N\left(=N_{1}<N_{2}<\ldots\right)$
and step $\ell=O\left(N^{\alpha}\right)$, $\alpha<1$, then $E\left(\Lambda\right)$
is not a Riesz sequence in $L^{2}\left(\bbS\right)$.
\end{thm}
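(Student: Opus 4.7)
The plan is to take $\bbS$ to be the complement of small, $q$-scale-dependent neighborhoods of every reduced rational in $\bbT$, and, for each AP in $\Lambda$ with slowly growing step, to violate the lower Riesz bound using a Dirichlet polynomial supported on a carefully chosen subprogression. Concretely, for each integer $q\ge 2$ put $\eps_q = c/(q^2\log^2 q)$ with $c>0$ to be chosen small, and let
\[
\bbS = \bbT\setminus (-c,c)\setminus\bigcup_{q\ge 2}\bigcup_{\substack{0\le p<q\\\gcd(p,q)=1}}\bigl(2\pi p/q-\eps_q,\,2\pi p/q+\eps_q\bigr).
\]
Using $\phi(q)\le q$, the excluded measure is at most $2c + 2c\sum_{q\ge 2} 1/(q\log^2 q)<\infty$, so for $c$ small enough we have $|\bbS|>0$.

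Next, fix $\alpha<1$ as in the hypothesis, set $\delta = (\alpha^{-1}-1)/2>0$, and for each pair $(N_j,\ell_j)$ with $\ell_j \le CN_j^\alpha$ define $N_j' = \floor{\ell_j^{1+\delta}}$, which is at most $N_j$ for $j$ large. Given the AP $M_j + \{\ell_j, 2\ell_j, \ldots, N_j\ell_j\}\subset\Lambda$, I would test $(\ref{eq:RS})$ against
\[
f_j(t) = e^{iM_j t}\sum_{k=1}^{N_j'}e^{ik\ell_j t},\qquad \sum_\lambda|a_\lambda|^2 = N_j'.
\]
Then $|f_j(t)|^2 = |D_{N_j'}(\ell_j t)|^2$, with $D_N(u) = \sum_{k=1}^N e^{iku}$, is $(2\pi/\ell_j)$-periodic in $t$ with a peak at each $t_r = 2\pi r/\ell_j$. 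Writing $r/\ell_j=p/q$ in lowest terms forces $q\mid\ell_j$, hence $\eps_q \ge \eps_{\ell_j}$ (the case $r=0$ is handled by the separately excluded interval $(-c,c)$), so the $\eps_{\ell_j}$-neighborhood of every peak is removed from $\bbS$. The elementary bound $|D_N(s)|^2 \le \pi^2/s^2$ on $(-\pi,\pi)\setminus\{0\}$, combined with the change of variables $s=\ell_j(t-t_r)$ on each peak, yields
\[
\int_{\bbS\cap\{|t-t_r|<\pi/\ell_j\}}|f_j|^2\,dt \le \frac{C}{\ell_j^2\eps_{\ell_j}},
\]
and summing over the $\ell_j$ peaks gives $\norm{f_j}_{L^2(\bbS)}^2 \le C/(\ell_j\eps_{\ell_j}) = C'\ell_j\log^2\ell_j$. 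Hence
\[
\frac{\norm{f_j}_{L^2(\bbS)}^2}{\sum_\lambda|a_\lambda|^2} \le \frac{C'\log^2\ell_j}{\ell_j^\delta} \longrightarrow 0 \qquad (j\to\infty)
\]
whenever $\ell_j\to\infty$, contradicting the left inequality of $(\ref{eq:RS})$; if instead $\ell_j$ stays bounded along a subsequence, the same bound with $N_j' = N_j$ produces a ratio $O(1/N_j)\to 0$.

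The delicate point I expect to be the main obstacle is calibrating a single scale $\eps_q$ that serves every $\alpha<1$. The constraint $\sum q\eps_q<\infty$ (needed for $|\bbS|>0$) forces $\eps_q$ to decay slightly faster than $1/q$, while the peak-tail estimate requires $\eps_q \gtrsim 1/(qN_j')$. The freedom to take $N_j' = \ell_j^{1+\delta}$ strictly smaller than $N_j$---available precisely because $\alpha<1$---is what lets the polylogarithmic scale $\eps_q \sim (q^2\log^2 q)^{-1}$ work uniformly across all $\alpha$. If one insisted on testing with the full progression ($N_j' = N_j$), the required $\eps_q$ would itself depend on $\alpha$, and no single $\bbS$ could handle every $\alpha<1$ simultaneously.
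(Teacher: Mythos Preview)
Your argument is correct and is essentially the paper's own approach (which appears there as the case $\alpha=1$ of Lemma~5): remove shrinking neighborhoods of the rationals from $\bbT$ and violate the lower Riesz bound by testing with the Dirichlet kernel on the given progression, exploiting the $\tfrac{2\pi}{\ell}$-periodicity of $|D_N(\ell t)|^2$ together with the tail estimate $|D_N(s)|\le \pi/|s|$.

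Two brief comparative remarks. First, the paper's construction is a touch cleaner: it takes the half-width around $2\pi j/\ell$ to be $c_0/\ell^{2}$ (equivalently $c_0/q^2$ around each reduced $p/q$), so no logarithmic factor is needed for summability; and instead of summing over peaks it performs the single change of variable $\tau=\ell t$, which folds $\bbT\setminus I_{[\ell]}$ onto $\bbT\setminus(-c_0/\ell,\,c_0/\ell)$ in one stroke. Second, the claim in your final paragraph is not right: testing with the \emph{full} progression of length $N_j$ already yields a ratio of order $\ell_j\log^2\ell_j/N_j\le C\,\ell_j^{1-1/\alpha}\log^2\ell_j\to 0$ for every fixed $\alpha<1$, so the restriction to a subprogression of length $\lfloor\ell_j^{1+\delta}\rfloor$ is unnecessary, and a single set $\bbS$ (with either your scale or the paper's) does handle all $\alpha<1$ simultaneously. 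This does not affect the validity of your proof, only its economy.
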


It should be mentioned that the set $\bbS$ in Theorem 1 was constructed
independently of the choice of $\alpha$, and with arbitrarily small
measure of the complement.

As indicated earlier, every set of positive measure admits a Riesz
sequence containing arbitrarily long arithmetic progressions. It had
been asked in \cite{LonOle} what can be said about the growth rate
of $\ell$ with respect to $N$ in those systems? By Theorem 1 we
know that, in general, $\ell$ cannot grow sublinearly. In \cite{LonOle}
it was proved that Theorem 1 is essentially sharp.
\begin{thm}[\cite{LonOle}]
Given a set $\bbS\subset\bbT$ of positive measure, there exists
a set $\Lambda\subset\bbZ$ such that $E\left(\Lambda\right)$ is
a Riesz sequence in $L^{2}\left(\bbS\right)$, and for infinitely
many $N$'s $\Lambda$ contains an arithmetic progression of length
$N$ and step $\ell=O\left(N\right)$.
\end{thm}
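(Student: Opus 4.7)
The plan is to construct $\Lambda$ as a lacunary disjoint union of finite blocks $\Lambda_{n}$, each containing a prescribed arithmetic progression of length $N_{n}$ and step $\ell_{n}\leq CN_{n}$ and each itself a Riesz sequence in $L^{2}\left(\bbS\right)$ with bounds independent of $n$. Gluing such uniformly-Riesz blocks across widely separated windows of $\bbZ$ then produces a Riesz sequence on the whole. The engine for producing the local blocks is a finitistic version of the Bourgain-Tzafriri theorem: for any $\bbS$ of positive measure and any large $M$, there exists $\Lambda_{0}^{M}\subset\left[0,M\right]$ of cardinality at least $cM\left|\bbS\right|$ whose exponential system is Riesz in $L^{2}\left(\bbS\right)$ with constants independent of $M$.

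For each large $N_{n}$, I would fix a step $\ell_{n}$ with $2\pi/\left|\bbS\right|<\ell_{n}\leq CN_{n}$, place the arithmetic progression $P_{n}=\left\{ a+k\ell_{n}:k=1,\ldots,N_{n}\right\}$ inside a window $I_{n}$ of length $M_{n}=\ell_{n}N_{n}$, and form $\Lambda_{n}$ by augmenting $P_{n}$ with a suitable, possibly thinned, portion of $\Lambda_{0}^{M_{n}}\cap I_{n}$ so that $\Lambda_{n}$ stays comfortably below Landau's density ceiling $\left|\bbS\right|/2\pi$. Since the density of $P_{n}$ inside $I_{n}$ is only $1/\ell_{n}=O\left(1/N_{n}\right)$, $P_{n}$ is sparse enough to be absorbed. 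With the blocks $\Lambda_{n}$ in hand, the global set $\Lambda=\bigsqcup_{n}\left(\Lambda_{n}+t_{n}\right)$ is assembled by placing translates $t_{n}$ along a Hadamard-lacunary sequence, spaced far enough apart that the supports of the $\Lambda_{n}+t_{n}$ are pairwise disjoint. Multiplication by $e^{it_{n}t}$ is an isometry on $L^{2}\left(\bbS\right)$ and hence preserves the Riesz bounds of each block; a gluing lemma in the spirit of Zygmund-Miheev then lets one conclude that $E\left(\Lambda\right)\in RS\left(\bbS\right)$.

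The main obstacle is the local step. A bare arithmetic progression $P_{n}$ need not be Riesz in $L^{2}\left(\bbS\right)$: the push-forward measure of $\mathbf{1}_{\bbS}$ by $t\mapsto\ell_{n}t\bmod2\pi$ may fail to have a positive essential lower bound on its density, in which case the Toeplitz matrix associated to $E\left(P_{n}\right)$ develops arbitrarily small eigenvalues as $N_{n}\to\infty$. The resolution is precisely the Bourgain-Tzafriri corrector absorbed into each $\Lambda_{n}$, combined with the density slack guaranteed by the linearity $\ell_{n}=O\left(N_{n}\right)$; this is the threshold at which the construction becomes possible, and it is sharp in view of Theorem 1.
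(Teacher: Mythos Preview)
Your proposal contains a genuine gap at exactly the point you flag as the ``main obstacle''. If the bare progression $P_{n}$ fails to be a Riesz sequence in $L^{2}(\bbS)$---i.e.\ if there exist normalized polynomials with spectrum in $P_{n}$ whose $L^{2}(\bbS)$-norm is arbitrarily small---then \emph{no} enlargement $\Lambda_{n}\supset P_{n}$ can repair this: the very same polynomials have spectrum in $\Lambda_{n}$ and witness the failure of the lower Riesz inequality for $E(\Lambda_{n})$. Adding Bourgain--Tzafriri points (or any points at all) can only make the lower bound worse, never better. So the ``corrector'' idea does not work, and the density slack you invoke is irrelevant to this issue.

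What the paper (following \cite{LonOle}) does is the opposite: it takes the blocks to be \emph{pure} arithmetic progressions $\{N_{j},2N_{j},\ldots,N_{j}^{2}\}$ (length $N_{j}$, step $\ell_{j}=N_{j}$) and proves directly that, for a specially chosen sequence $N_{j}$ depending on $\bbS$, each such block is already a Riesz sequence in $L^{2}(\bbS)$ with a common lower bound $\gamma>0$. The mechanism is the change of variables $\tau=N_{j}t$, which gives
\[
\int_{\bbS}\left|Q_{N_{j}}(N_{j}t)\right|^{2}dt=\int_{\bbT}\left|Q_{N_{j}}(\tau)\right|^{2}\frac{\nu_{N_{j},\bbS}(\tau/N_{j})}{N_{j}}\,d\tau,
\]
where $\nu_{N_{j},\bbS}$ is the multiplicity function of Section~4. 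One then chooses the $N_{j}$ so that the set where $\nu_{N_{j},\bbS}(\tau/N_{j})<\delta N_{j}$ has measure $O(1/N_{j})$; on the complement the integrand dominates $\delta|Q_{N_{j}}|^{2}$, and the exceptional set is handled by the crude estimate~(\ref{eq:CS}) with the factor $N_{j}$ cancelled by the $O(1/N_{j})$ measure bound. This is precisely where the linear relation $\ell=N$ enters. Once the blocks have a uniform lower bound, the gluing is done by Lemma~\ref{lem:finite} and Corollary~\ref{cor:uniting blocks}, which is the only part of your outline that matches the actual argument.
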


Theorem 2 was proved by explicitly constructing the set $\Lambda$,
which takes the form 
\[
\Lambda=\bigcup_{j\in\bbN}\left(M_{j}+\left\{ N_{j},2N_{j}\ldots,N_{j}^{2}\right\} \right)
\]
for some specially chosen increasing sequence $\left\{ N_{j}\right\} _{j\in\bbN}$
(depending on $\bbS$) and translations $\left\{ M_{j}\right\} _{j\in\bbN}$,
so in reality we have here $\ell_{j}=N_{j}$.

We note that the latter results were considered in the multidimensional
setting as well (see \cite{Lon}).

\section{Results}

\subsection{The classes $\mathcal{A}\left(\alpha\right)$}

Theorems 1 and 2 suggest that one can distinguish between subsets
of the circle through Riesz sequences containing arbitrarily long
arithmetic progressions. The goal of this paper is to show that this
phenomenon occurs at every scale, i.e. every set admits an optimal
asymptotic growth rate which controls the step in any arithmetic progression
of a given length in Riesz sequences over that set. To this end we
define the following classes of sets.
\begin{defn*}
Let $\alpha\in\left[0,1\right]$. We say that a measurable set $\bbS\subset\bbT$
belongs to the class $\mathcal{A}\left(\alpha\right)$, if $\bbS$
admits a Riesz sequence $E\left(\Lambda\right)$ such that $\Lambda$
contains, for infinitely many $N$'s, an arithmetic progression of
length $N$ and step $\ell=O\left(N^{\alpha}\right)$.
\end{defn*}
It follows directly from the definition that $\mathcal{A}\left(\beta\right)\subseteq\mathcal{A}\left(\alpha\right)$
for all $\beta<\alpha$. We observe that by Theorem 2, the class $\mathcal{A}\left(1\right)$
contains all measurable subsets of the circle. When combined with
Theorem 1, it implies that ${\displaystyle \mathcal{A}\left(1\right)\backslash\bigcup_{\beta<1}\mathcal{A}\left(\beta\right)}$
is nonempty. Also, by a trivial argument, the class $\mathcal{A}\left(0\right)$
contains all sets with nonempty interior (but not only those). 

When $\beta<\alpha$, which inclusions are proper? Can we, in fact,
separate these classes from one another?

Our main result answers this question positively.
\begin{thm}
\label{thm:main thm}For any $\alpha\in\left(0,1\right]$ we have
${\displaystyle \mathcal{A}\left(\alpha\right)\backslash\bigcup_{\beta<\alpha}\mathcal{A}\left(\beta\right)}$
is nonempty.
\end{thm}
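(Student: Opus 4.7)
The plan is to produce, for each $\alpha\in(0,1]$, a single set $\bbS_{\alpha}\subset\bbT$ that witnesses the claim, i.e.\ satisfies $\bbS_{\alpha}\in\mathcal{A}(\alpha)$ and $\bbS_{\alpha}\notin\mathcal{A}(\beta)$ for every $\beta<\alpha$. I would construct $\bbS_{\alpha}$ as a highly lacunary union of finite layers, each layer built at a geometric scale $r_{j}$ and equipped with an internal arithmetic structure whose period is matched to $r_{j}^{\alpha}$. The scales $r_{j}$ grow so rapidly that the interactions between distinct layers, both in the construction of the Riesz sequence and in the verification of the negative direction, become negligible.

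For the positive direction $\bbS_{\alpha}\in\mathcal{A}(\alpha)$, I would follow the strategy behind Theorem 2 and seek a Riesz sequence of the form
\[
\Lambda_{\alpha}=\bigcup_{j\in\bbN}\bigl(M_{j}+\ell_{j}\{1,2,\dots,N_{j}\}\bigr),\qquad\ell_{j}\asymp N_{j}^{\alpha},
\]
with $N_{j}\to\infty$ rapidly. Using the change of variables $t\mapsto\ell_{j}t\bmod 2\pi$, the analysis of each block reduces to a Riesz-type estimate for $E(\{1,\dots,N_{j}\})$ against the pushforward measure $(\ell_{j})_{\ast}(\mathbf{1}_{\bbS_{\alpha}}dt)$. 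The set $\bbS_{\alpha}$ is calibrated so that, at the matched scale, this pushforward dominates a positive multiple of Lebesgue measure on a subset of $\bbT$ of size bounded below uniformly in $j$; Kahane's theorem then supplies a uniform per-block lower Riesz bound. A Zygmund/Miheev-type lacunary gluing argument, with the translations $M_{j}$ and the gaps $N_{j+1}/N_{j}$ taken sufficiently large, merges the blocks into a single Riesz sequence in $L^{2}(\bbS_{\alpha})$.

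For the negative direction $\bbS_{\alpha}\notin\mathcal{A}(\beta)$ when $\beta<\alpha$, I would adapt the argument behind Theorem 1. Given any hypothetical $\Lambda\in RS(\bbS_{\alpha})$ containing, for infinitely many $N$, a progression $M+\ell\{1,\dots,N\}$ with $\ell=O(N^{\beta})$, the same dilation $t\mapsto\ell t$ reduces the lower Riesz inequality to a Riesz-type estimate for $E(\{1,\dots,N\})$ against $(\ell)_{\ast}(\mathbf{1}_{\bbS_{\alpha}}dt)$. By construction of $\bbS_{\alpha}$, for every such $\ell$ the image measure concentrates on a subset of $\bbT$ of mass so small that, by Landau's necessary density condition, no uniform-in-$N$ lower Riesz bound can survive. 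Letting $N\to\infty$ along the relevant subsequence contradicts the Riesz property and rules out membership in $\mathcal{A}(\beta)$. An explicit test polynomial of the form $\sum a_{k}e^{ikt}$, chosen to be nearly supported outside the concentration set of the pushforward, quantifies this contradiction.

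The hard part will be calibrating $\bbS_{\alpha}$ so that the two sides of the argument meet at exactly the same exponent $\alpha$: the arithmetic scales inside $\bbS_{\alpha}$ must leave enough ``room'' at every scale $\asymp N^{\alpha}$ to host a length-$N$ Riesz block, yet no strictly smaller step $\asymp N^{\beta}$ with $\beta<\alpha$ may enjoy the same property. This forces a delicate trade-off between the measures of the layers, the periods of their internal arithmetic structure, and the rate of growth of the lacunary parameter $r_{j}$. Identifying a geometric profile that realises this sharp threshold is, I expect, the main technical content of the proof and the source of the partial geometric description of the classes $\mathcal{A}(\alpha)$ advertised in the abstract.
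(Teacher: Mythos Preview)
Your overall plan---build a witness $\bbS_\alpha$ and verify both inclusions via the dilation $t\mapsto\ell t$---is right, and the test polynomial you mention at the end is indeed the mechanism for the negative direction (Landau's density theorem gives no finite-$N$ information and plays no role). But the construction you propose cannot work. A set assembled from layers at only a \emph{lacunary} sequence of scales $r_j$ will not defeat every $\Lambda$ containing long progressions of step $\ell=O(N^\beta)$: you have no control over which steps $\ell$ arise in $\Lambda$, and if $\ell$ avoids all your $r_j$ (and their divisors) the pushforward $(\ell)_*(\mathbf{1}_{\bbS_\alpha}\,dt)$ has no reason to exhibit the hole you claim. The paper instead removes, for \emph{every} $\ell\in\bbN$, a set $I_{[\ell]}$ of $\ell$ intervals of half-length $\delta(\ell)/\ell=c_0\,\ell^{-1-1/\alpha}$ centred at $2\pi j/\ell$; then for any step $\ell$ whatsoever the fold of $\bbS_\alpha^{\,c}$ under $t\mapsto\ell t$ contains $(-\delta(\ell),\delta(\ell))$, and the normalised Dirichlet kernel $P_N$ gives $\int_{\bbS_\alpha}|P_N(\ell t)|^2\,dt\lesssim (N\delta(\ell))^{-1}\to 0$ exactly when $\ell=O(N^\beta)$ with $\beta<\alpha$.

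Removing something at every scale makes the positive direction far harder than an appeal to Kahane: for the blocks $B_p^\alpha=\{p,2p,\dots,N_p p\}$ with $N_p=\lfloor p^{1/\alpha}\rfloor$ and $p$ prime, one must show that $\sum_\ell\int_{J_{[\ell]}}|Q(pt)|^2\,dt$ stays bounded away from $1$ uniformly in the normalised polynomial $Q$ and in $p$. The paper splits the $\ell$'s into several ranges; in the critical range $c_0 p\le\ell\le N_p$ with $p\nmid\ell$, after the change $\tau=pt$ the relevant sets $\mathcal{J}_{p,[\ell]}\subset\bbT$ are shown to have small total overlap: for $\alpha<\tfrac12$ they decompose into $O_\alpha(1)$ pairwise-disjoint families, while for $\alpha\ge\tfrac12$ a Diophantine count $M_\rho(x,N)\lesssim N^{1-\rho}$ on reduced rationals $m/n$ within $n^{-1-\rho}$ of $x$ bounds the multiplicity pointwise. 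This overlap analysis---not a pushforward-plus-Kahane estimate---is where the threshold $\alpha$ is actually pinned down, and it is the technical content your sketch defers to the final paragraph without supplying.
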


Such set $\mathcal{S}\in{\displaystyle \mathcal{A}\left(\alpha\right)\backslash\bigcup_{\beta<\alpha}\mathcal{A}\left(\beta\right)}$
would admit a Riesz sequence containing arbitrarily long arithmetic
progressions of length $N$ and step $\ell=O\left(N^{\alpha}\right)$,
but any system $E\left(\Lambda\right)$ for which the step in arithmetic
progressions of length $N$ in $\Lambda$ grow at a rate bounded by
$C\cdot N^{\beta}$ cannot be Riesz sequences in $L^{2}\left(\mathcal{S}\right)$.
We prove Theorem \ref{thm:main thm} by explicitly constructing a set which admits a Riesz sequence having the desired properties. It is important to emphasize that the set constructed in the proof of Theorem \ref{thm:main thm} can be chosen to have arbitrarily small measure of the complement, independently of $\alpha$.

\subsection{On the structure of sets in $\mathcal{A}\left(\alpha\right)$}

Theorem 3 suggests that the arithmetic structure that can be found
inside a Riesz sequence can help to sort subsets of the circle. It
is therefore in our interest to obtain a description of the structure
of sets in a given class. This is done using the following
\begin{defn*}
Given a set $\bbS\subset\bbT$ with $\left|\bbS\right|>0$ and a positive
integer $\ell$, we define the $\ell$-th multiplicity function $\nu_{\ell,\bbS}:\left[0,\frac{2\pi}{\ell}\right)\rightarrow\left\{ 0,1,\ldots,\ell\right\} $
associated with $\bbS$ 
\[
\nu_{\ell,\bbS}\left(t\right):=\left(\mathbf{1}_{\bbS}*\delta_{\frac{2\pi}{\ell}\bbZ}\right)\left(t\right).
\]
\end{defn*}
Our next result conveys the idea that some type of order inside a
set $\bbS$ is reflected in another type of order in Riesz sequences
this set admits.
\begin{thm}
Given a set $\bbS\subset\bbT$ with $\left|\bbS\right|>0$ and $\alpha\in\left[0,1\right]$.
If there exist constants $c,\delta\in\left(0,1\right)$, which might
depend on $\alpha$, such that for infinitely many values of $\ell\in\bbN$,
the $\ell$-th multiplicity function satisfies 
\begin{equation}
\left|\left\{ t\in\bbT\,|\,\nu_{\ell,\bbS}\left(\nicefrac{t}{\ell}\right)<\delta\ell\right\} \right|<\frac{c}{\ell^{\nicefrac{1}{\alpha}}}\label{eq:mult}
\end{equation}
then $\bbS\in\mathcal{A}\left(\alpha\right)$. Moreover, if $\alpha=0$
then this condition is also necessary.
\end{thm}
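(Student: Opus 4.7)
The plan rests on a folding identity: for an AP $M+\ell\cdot\{1,\ldots,N\}$, the substitution $s=\ell t$ gives
\[
\norm{\textstyle\sum_k a_k e^{i(M+k\ell)t}}_{L^2(\bbS)}^2 \;=\; \tfrac{1}{\ell}\int_0^{2\pi}|p(s)|^2\,\nu_{\ell,\bbS}(s/\ell)\,ds,\qquad p(s)=\textstyle\sum_k a_k e^{iks},
\]
turning the Riesz-sequence property of a single AP into a weighted estimate for an analytic polynomial $p$ with weight $\nu_{\ell,\bbS}(s/\ell)$. For the \emph{sufficiency} when $\alpha\in(0,1]$, fix a large constant $K$ and, for each $\ell$ satisfying \eqref{eq:mult}, set $N_\ell=\ceiling{\ell^{1/\alpha}/K}$, which gives $\ell\leq K^{\alpha}N_\ell^{\alpha}$. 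On $\bbT\setminus E_\ell$, where $E_\ell=\{t:\nu_{\ell,\bbS}(t/\ell)<\delta\ell\}$, the weight is at least $\delta\ell$; on $E_\ell$, the Nikol'skii bound $\|p\|_\infty^2\le N_\ell\|a\|_{\ell^2}^2$ together with $|E_\ell|<c/\ell^{1/\alpha}$ controls $\int_{E_\ell}|p|^2\le(2c/K)\|a\|_{\ell^2}^2$. Combined with $\int_\bbT|p|^2=2\pi\|a\|_{\ell^2}^2$ this yields a lower Riesz constant of order $\delta$ for the single AP, uniform in $\ell$, provided $K$ is chosen large enough in terms of $c$ and $\delta$; the upper bound $B=2\pi$ is automatic since $\Lambda\subset\bbZ$. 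The case $\alpha=0$ is identical with $|E_\ell|=0$ and $N_\ell$ arbitrary.

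To glue infinitely many of these single-block systems into one Riesz sequence, select an increasing subsequence $\{\ell_j\}$ of good values and form $\Lambda=\bigsqcup_j(M_j+\ell_j\cdot\{1,\ldots,N_{\ell_j}\})$ with translations $M_j$ chosen recursively. The Gram matrix $G_{\lambda\mu}=\widehat{\mathbf{1}_\bbS}(\mu-\lambda)$ decomposes as $G=G_{\mathrm{diag}}+G_{\mathrm{off}}$; the diagonal blocks are exactly the single-block Gram matrices already controlled, while each off-diagonal block $G^{(j,j')}$ consists of Fourier coefficients of $\mathbf{1}_\bbS$ at frequencies of absolute value at least $|M_j-M_{j'}|-O(N_{\ell_j}\ell_j+N_{\ell_{j'}}\ell_{j'})$. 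Riemann--Lebesgue allows $M_j$ to be chosen so large, given the previously placed blocks, that the crude Hilbert--Schmidt bound $\|G^{(j,j')}\|_{\mathrm{op}}\le\sqrt{N_{\ell_j}N_{\ell_{j'}}}\,\max|G_{\lambda\mu}|$ drops below $\eta\cdot 2^{-(j+j')}$ for any prescribed $\eta>0$. A Schur test on the block-index matrix then controls $\|G_{\mathrm{off}}\|_{\mathrm{op}}$ by an absolute multiple of $\eta$, and choosing $\eta$ small enough preserves the single-block lower Riesz constant for the full system, in direct analogy with the construction underlying Theorem 2.

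For \emph{necessity} at $\alpha=0$, suppose $\bbS\in\mathcal{A}(0)$ admits a Riesz sequence $\Lambda$ with lower constant $A$. The bounded-step hypothesis and pigeonhole produce a fixed $\ell$ that is the step of AP's of arbitrarily large length inside $\Lambda$. Feeding these AP coefficients into the folding identity and letting the length tend to infinity gives
\[
\tfrac{1}{\ell}\int_0^{2\pi}|p|^2\,\nu_{\ell,\bbS}(s/\ell)\,ds \;\ge\; \tfrac{A}{2\pi}\int_0^{2\pi}|p|^2\,ds
\]
for every analytic trigonometric polynomial $p$. Fej\'er--Riesz represents every nonnegative trigonometric polynomial as $|p|^2$ with $p$ analytic, and density promotes the inequality to every nonnegative continuous test function, forcing $\nu_{\ell,\bbS}(s/\ell)\ge A\ell/(2\pi)$ almost everywhere. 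The identity $\nu_{m\ell,\bbS}(t)=\sum_{q=0}^{m-1}\nu_{\ell,\bbS}(t+2\pi q/(m\ell))$ then propagates the bound to every multiple $m\ell$ with the same $\delta=A/(2\pi)$, furnishing infinitely many good values. The main technical obstacle is the gluing step: the choice of $M_j$ must simultaneously defeat the growing block sizes $N_{\ell_j}\sim\ell_j^{1/\alpha}$ and the slow decay of $\widehat{\mathbf{1}_\bbS}$ so that the operator-norm bookkeeping across all pairs of blocks yields a uniform perturbation bound. Steps 1 and 3 reduce, via the folding identity, to classical Nikol'skii and Fej\'er--Riesz/Szeg\H{o} estimates.
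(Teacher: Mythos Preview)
Your sufficiency argument for $\alpha\in(0,1]$ matches the paper's: the same folding identity, the same Cauchy--Schwarz estimate $\int_{E_\ell}|p|^2\le N\,|E_\ell|\,\|a\|_2^2$ (the paper's inequality~(\ref{eq:CS})), and the same block-gluing mechanism, which the paper packages as Corollary~\ref{cor:uniting blocks} via Lemma~\ref{lem:finite}; your Riemann--Lebesgue plus Schur-test description is exactly how one proves that lemma. One cosmetic simplification you miss: since $c<1$ is already assumed, the paper takes $N=\lfloor\ell^{1/\alpha}\rfloor$ directly and obtains the clean lower bound $\delta(1-c)$, without introducing your auxiliary constant $K$.

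For the necessity at $\alpha=0$ you take a genuinely different route. The paper (Lemma~\ref{lem:shiftinv}) argues by contraposition and explicit construction: if $\{\nu_{\ell,\bbS}=0\}$ has positive measure, truncating the Fourier series of its indicator produces polynomials with large $L^2(\bbT)$ norm but arbitrarily small $L^2(\bbS)$ norm, so $E(\ell\bbZ)$ cannot be a Riesz sequence. This yields only $\nu_\ell\ge 1$ a.e.; the passage from there to the stated condition (some fixed $\delta\in(0,1)$ working for infinitely many $\ell$) via the telescoping identity for $\nu_{m\ell}$ and the pigeonhole reduction to a single step is left implicit in the paper, whereas you spell it out. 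Your Fej\'er--Riesz argument is more quantitative: it converts the Riesz lower bound $A$ directly into the pointwise inequality $\nu_{\ell,\bbS}(s/\ell)\ge(A/2\pi)\ell$ a.e., so the $\delta$ you extract reflects the Riesz constant itself rather than just $1/\ell$. Both approaches are correct; the paper's is more elementary (no Fej\'er--Riesz needed), while yours yields a sharper structural conclusion and makes the link between Lemma~\ref{lem:shiftinv} and Theorem~4 explicit.
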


Here $\alpha=0$ means $\left|\left\{ \nu_{\ell,\bbS}\left(\nicefrac{t}{\ell}\right)<\delta\ell\right\} \right|=0$.

Simply put, a set $\bbS$ satisfies condition $\left(\ref{eq:mult}\right)$
with a fixed $\delta$ if for infinitely many values of $\ell$ and
for every $t\in\bbT$, up to some exceptional set of small measure,
$\bbS$ contains a $\delta$-proportion of the arithmetic progression
of length $\ell$ and step $\frac{2\pi}{\ell}$, passing through $t$.

\section{The classes $\mathcal{A}\left(\alpha\right)$}

The proof of Theorem 3 is composed of 3 parts. In the first part, we
generalize our construction from Section 3 in \cite{LonOle}, constructing
the set $\bbS_{\alpha}$. In the second, we show that this set does
not belong to any of the \textquotedblleft lower\textquotedblright{}
classes, i.e. $\mathcal{\bbS_{\alpha}\notin A}\left(\beta\right)$
when $\beta<\alpha$. In the third part, we prove that it does belong to $\mathcal{A}\left(\alpha\right)$.

Since the case $\alpha=1$ has already been covered in \cite{LonOle},
we restrict to $\alpha\in\left(0,1\right)$.

\subsection{\label{subsec:Construction-of-the}Construction of the set $\protect\bbS_{\alpha}$}

We start by generalizing the construction from \cite{LonOle}. Let
$\alpha\in\left(0,1\right)$, $\eps\in\left(0,\nicefrac{1}{4}\right)$ and define 
\[
\delta\left(\ell\right)=\frac{c_{0}}{\ell^{\nicefrac{1}{\alpha}}}\quad,\quad\ell=1,2,\ldots
\]
where $c_{0}=c_{0}\left(\alpha,\eps\right)\in\left(0,\eps\right)$ will be a fixed constant, chosen so that $\sum_{\ell=1}^{\infty}2\delta\left(\ell\right)<\eps$.

For every $\ell\in\bbN$ set 
\[
I_{\ell}=\left(-\delta\left(\ell\right),\delta\left(\ell\right)\right)\,,\,\mathcal{I}_{\ell}=\left(-\frac{\delta\left(\ell\right)}{\ell},\frac{\delta\left(\ell\right)}{\ell}\right)
\]
and let $\tilde{I}_{\ell}$ $2\pi$-periodic extension of $I_{\ell}$, i.e.
\[
\tilde{I}_{\ell}=\bigcup_{k\in\bbZ}\left(I_{\ell}+2\pi k\right).
\]
We also define 
\begin{equation}
I_{\left[\ell\right]}=\left(\frac{1}{\ell}\cdot\tilde{I}_{\ell}\right)\cap\left[0,2\pi\right)\text{ and }\bbS_{\alpha}=\bbT\backslash\bigcup_{\ell\in\bbN}I_{\left[\ell\right]}\label{eq:set def}
\end{equation}
and see that 
\[
\nicefrac{\left|\bbS_{\alpha}\right|}{2\pi}\geq1-\sum_{\ell=1}^{\infty}\left|I_{\left[\ell\right]}\right|=1-\sum_{\ell=1}^{\infty}2\delta\left(\ell\right)>1-\eps.
\]
Observe that by definition we may identify $I_{\left[\ell\right]}$
with the following union
\begin{equation}
I_{\left[\ell\right]}=\bigcup_{j=0}^{\ell-1}\left(\frac{2\pi j}{\ell}+\mathcal{I}_{\ell}\right)\label{eq:set I_l}
\end{equation}
considered as a subset of $\bbT$. Since the sequence $\left\{ \delta\left(\ell\right)\right\} $
is decreasing, it is easily seen that in the definition of $\bbS_{\alpha}$,
considering the union $\bigcup_{\ell\in\bbN}I_{\left[\ell\right]}$,
any of the sets $I_{\left[\ell\right]}$ may be replaced by a set
$J_{\left[\ell\right]}$, where $I_{\left[1\right]}=J_{\left[1\right]}$
and 
\begin{equation}
J_{\left[\ell\right]}=\bigcup_{\underset{\gcd\left(j,\ell\right)=1}{j=1}}^{\ell-1}\left(\frac{2\pi j}{\ell}+\mathcal{I}_{\ell}\right)\text{ for }\ell>1.\label{eq:set J_l}
\end{equation}
In some cases the sets $J_{\left[\ell\right]}$ would be easier to
handle. We prove
\begin{lem}
$\bbS_{\alpha}\notin \mathcal{A}\left(\beta\right)$ for any $\beta\in\left[0,\alpha\right)$.
\end{lem}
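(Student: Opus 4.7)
The plan is to argue by contradiction. Suppose $\bbS_\alpha\in\mathcal{A}\left(\beta\right)$ for some $\beta\in\left[0,\alpha\right)$, and let $\Lambda\in RS\left(\bbS_\alpha\right)$ realize the membership, with lower Riesz bound $A>0$. By the definition of the class, there are infinitely many $N$ for which $\Lambda$ contains an arithmetic progression $M+\left\{ \ell,2\ell,\ldots,N\ell\right\} $ with step $\ell=\ell_{N}\leq C_{1}N^{\beta}$. For each such $N$ I would test the Riesz inequality against a polynomial $P_{N}\left(t\right)=e^{iMt}Q_{N}\left(\ell t\right)$ whose frequencies lie in this progression, and extract the contradiction from a sharp $L^{2}\left(\bbS_\alpha\right)$ upper bound on $\left\Vert P_{N}\right\Vert $.

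The main geometric input is that $\bbS_\alpha\subseteq\bbT\setminus I_{\left[\ell\right]}$, and by \eqref{eq:set I_l} the set $I_{\left[\ell\right]}$ consists of $\ell$ arcs of radius $\delta\left(\ell\right)/\ell$ centered at the points $2\pi j/\ell$. Performing the change of variables $s=\ell t$ and using $2\pi$-periodicity of $Q_{N}$, each of the $\ell$ complementary arcs is sent (with Jacobian $1/\ell$) onto the same subset $\left\{ \delta\left(\ell\right)<\left|s\right|\leq\pi\right\} $ of $\bbT$, giving
\[
\left\Vert P_{N}\right\Vert _{L^{2}\left(\bbS_\alpha\right)}^{2}\;\leq\;\int_{\bbT\setminus I_{\left[\ell\right]}}\left|Q_{N}\left(\ell t\right)\right|^{2}dt\;=\;\int_{\delta\left(\ell\right)<\left|s\right|\leq\pi}\left|Q_{N}\left(s\right)\right|^{2}ds.
\]
The strategy is now to choose $Q_{N}\left(s\right)=\sum_{k=1}^{N}a_{k}e^{iks}$ with $L^{2}$-mass maximally concentrated near the origin. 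A frequency-shifted Fej\'er kernel $Q_{N}\left(s\right)=e^{i\left(\left(N+1\right)/2\right)s}F_{\left(N-1\right)/2}\left(s\right)$ (suitably adjusted for parity of $N$) works: its frequencies lie exactly in $\left\{ 1,\ldots,N\right\} $, $\sum_{k}\left|a_{k}\right|^{2}\asymp N$, and the classical bound $F_{m}\left(s\right)^{2}\leq C/\left(m^{2}s^{4}\right)$ on $\left(-\pi,\pi\right]\setminus\left\{ 0\right\} $ yields
\[
\int_{\delta<\left|s\right|\leq\pi}\left|Q_{N}\left(s\right)\right|^{2}ds\;\leq\;\frac{C}{N^{2}\delta^{3}}\qquad\left(\delta>C/N\right).
\]

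Combining the two estimates with the lower Riesz bound $\left\Vert P_{N}\right\Vert _{L^{2}\left(\bbS_\alpha\right)}^{2}\geq A\sum\left|a_{k}\right|^{2}\geq cAN$ and substituting $\delta\left(\ell\right)=c_{0}/\ell^{1/\alpha}\geq c_{2}N^{-\beta/\alpha}$, one obtains
\[
cAN\;\leq\;\frac{C}{N^{2}\delta\left(\ell\right)^{3}}\;\leq\;C'N^{3\beta/\alpha-2},\qquad\text{i.e.}\qquad N^{3\left(1-\beta/\alpha\right)}\;\leq\;C''/A.
\]
Since $\beta<\alpha$, the exponent on the left is strictly positive, so the inequality fails for all sufficiently large $N$, contradicting the existence of infinitely many admissible $N$. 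The argument is a quantitative refinement of the $\alpha=1$ proof in \cite{LonOle}: taking $\delta\left(\ell\right)$ to decay like $\ell^{-1/\alpha}$ pushes the critical concentration threshold from $1/N$ up to $1/N^{\alpha}$, separating $\mathcal{A}\left(\alpha\right)$ from $\mathcal{A}\left(\beta\right)$ at this finer scale. I do not anticipate a serious obstacle; the only care required is in verifying the Fej\'er tail estimate and ensuring $\delta\left(\ell\right)\gg1/N$ holds asymptotically, which follows from $\beta/\alpha<1$.
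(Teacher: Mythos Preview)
Your argument is correct and follows essentially the same route as the paper: both use the inclusion $\bbS_\alpha\subset\bbT\setminus I_{[\ell]}$, the change of variables $s=\ell t$ to reduce to an integral over $\bbT\setminus I_\ell$, and then test against a polynomial concentrated near the origin to force the lower Riesz bound to fail. The only difference is cosmetic: the paper uses the normalized Dirichlet sum $P_N(t)=N^{-1/2}\sum_{k=1}^N e^{ikt}$, whose tail bound $\int_{\bbT\setminus I_\ell}|P_N|^2\le C/(N\delta(\ell))\le C\,N^{\beta/\alpha-1}$ already tends to zero, so the Fej\'er kernel and the cubic power of $\delta(\ell)$ are unnecessary refinements.
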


\begin{proof}
Fix $\beta\in\left[0,\alpha\right)$ and let $\Lambda\subset\bbZ$
be such that one can find arbitrarily large $N\in\bbN$ for which
\[
M+\left\{ \ell,\ldots,N\ell\right\} \subset\Lambda
\]
with some $M=M\left(N\right)\in\bbZ$, $\ell=\ell\left(N\right)\in\bbN$
and 
\begin{equation}
\ell<C\left(\beta\right)N^{\beta}.\label{eq:growth}
\end{equation}

Recall that by $\left(\ref{eq:set def}\right)$ we have $t\in I_{\left[\ell\right]}$
if and only if $t\ell\in\tilde{I}_{\ell}\cap\left[0,2\pi\ell\right)$.
Since $\bbS_{\alpha}$ lies inside the complement of $I_{\left[\ell\right]}$,
we get 
\[
\intop_{\bbS_{\alpha}}\left|\sum_{k=1}^{N}a\left(k\right)e^{i\left(M+k\ell\right)t}\right|^{2}dt\leq\intop_{\bbT\backslash I_{\left[\ell\right]}}\left|\sum_{k=1}^{N}a\left(k\right)e^{i\left(M+k\ell\right)t}\right|^{2}dt=
\]
\[
=\intop_{\left[0,2\pi\ell\right)\backslash\tilde{I}_{\ell}}\left|\sum_{k=1}^{N}a\left(k\right)e^{ik\tau}\right|^{2}\frac{d\tau}{\ell}=\intop_{\bbT\backslash I_{\ell}}\left|\sum_{k=1}^{N}a\left(k\right)e^{ik\tau}\right|^{2}d\tau
\]
Next we set $P_{N}\left(t\right)=\frac{1}{\sqrt{N}}\sum_{k=1}^{N}e^{ikt}$,
so $\norm{P_{N}}_{L^{2}\left(\bbT\right)}=1$. Moreover, for every
$t\in \left[0,\pi\right)$ we have $\left|P_{N}\left(t\right)\right|\leq\frac{1}{\sqrt{N}\left|\sin\frac{t}{2}\right|}$,
hence, by $\left(\ref{eq:growth}\right)$ and the fact that for $t\in\bbT$ $P_{N}(t)=P_{N}(2\pi-t)$ we get
\[
\intop_{\bbT\backslash I_{\ell}}\left|P_{N}\left(t\right)\right|^{2}dt\leq\frac{C}{N}\intop_{\delta\left(\ell\right)}^{\pi}\frac{dt}{\sin^{2}\frac{t}{2}}<\frac{C}{N}\intop_{\delta\left(\ell\right)}^{\pi}\frac{dt}{t^{2}}<\frac{C}{\delta\left(\ell\right)N}\leq\frac{C\left(\beta\right)}{\delta\left(\ell\right)\ell^{\nicefrac{1}{\beta}}}
\]
where the last inequality holds for every $N$ for which $\left(\ref{eq:growth}\right)$
holds. It now follows from the choice of $\delta\left(\ell\right)$
that the last term can be made arbitrarily small, and so $E\left(\Lambda\right)$
is not a Riesz sequence in $L^{2}\left(\bbS_{\alpha}\right)$.
\end{proof}

\subsection{Uniting blocks}

In this section we lay out a basic, yet rather useful, principle that
enables the construction of Riesz sequences containing arbitrarily
long arithmetic progressions. Whenever this principle will be applied,
it will always lead to the conclusion that some countable union of
finite arithmetic progressions, which we sometimes refer to as "blocks", form a Riesz sequence over some set.

We require the following lemma.
\begin{lem}[\cite{LonOle}, Lemma 8]
\label{lem:finite}Let $\gamma>0$, $\bbS\subset\bbT$ with $\left|\bbS\right|>0$
and $A_{1},\,A_{2}$ finite sets of integers such that $\gamma$ is
a lower Riesz bound in $L^{2}\left(\bbS\right)$ for $E\left(A_{k}\right)$,
$k=1,2$. Then for any $0<\gamma^{\prime}<\gamma$ there exists $M\in\bbZ\quad$such
that the system $E\left(A_{1}\cup\left(M+A_{2}\right)\right)$ is
a Riesz sequence in $L^{2}\left(\bbS\right)$ with lower Riesz bound
$\gamma^{\prime}$.
\end{lem}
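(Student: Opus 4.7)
The plan is to expand the $L^{2}(\bbS)$-norm of a generic combined element into the two individual block norms plus a cross term, and then to show that the cross term can be forced arbitrarily small by choosing $|M|$ large, \emph{uniformly} over all coefficient sequences. Specifically, writing $p(t)=\sum_{\lambda\in A_{1}}a_{\lambda}e^{i\lambda t}$ and $q(t)=\sum_{\mu\in A_{2}}b_{\mu}e^{i\mu t}$, one has
\[
\int_{\bbS}\bigl|p(t)+e^{iMt}q(t)\bigr|^{2}\,dt=\int_{\bbS}|p|^{2}\,dt+\int_{\bbS}|q|^{2}\,dt+2\operatorname{Re}\int_{\bbS}p(t)\overline{q(t)}e^{-iMt}\,dt,
\]
and the first two terms already exceed $\gamma\bigl(\sum|a_{\lambda}|^{2}+\sum|b_{\mu}|^{2}\bigr)$ by hypothesis. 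So everything reduces to controlling the cross term.

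Expanding $p\overline{q}$ as a trigonometric polynomial, the cross term is (up to a factor of $2\pi$) a linear combination of Fourier coefficients of $\mathbf{1}_{\bbS}$ at shifts of the form $M+\mu-\lambda$ with $\lambda\in A_{1}$, $\mu\in A_{2}$. Since $\mathbf{1}_{\bbS}\in L^{1}(\bbT)$, the Riemann--Lebesgue lemma gives $\widehat{\mathbf{1}_{\bbS}}(n)\to 0$ as $|n|\to\infty$; because $A_{1},A_{2}$ are \emph{fixed finite} sets, only finitely many offsets $\mu-\lambda$ occur, so for any $\eta>0$ and all $|M|$ large enough every such Fourier coefficient satisfies $|\widehat{\mathbf{1}_{\bbS}}(M+\mu-\lambda)|<\eta$ simultaneously. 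A routine Cauchy--Schwarz bound then controls the cross term by $C(A_{1},A_{2})\,\eta\,\bigl(\sum|a_{\lambda}|^{2}+\sum|b_{\mu}|^{2}\bigr)$, where $C(A_{1},A_{2})$ depends only on the cardinalities of $A_{1}$ and $A_{2}$.

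Choosing $\eta$ small enough that $C(A_{1},A_{2})\eta\leq \gamma-\gamma'$ then produces the desired lower Riesz bound $\gamma'$. Taking $M$ further to exceed $\max A_{1}-\min A_{2}$ ensures $A_{1}\cap(M+A_{2})=\emptyset$, so $A_{1}\cup(M+A_{2})$ is a genuine finite set of distinct integers, and the upper Riesz bound follows immediately from Parseval on $L^{2}(\bbT)$. The only subtle point of the plan is the uniformity of Riemann--Lebesgue decay across all coefficient sequences, and the finiteness of $A_{1}$ and $A_{2}$ is exactly what reduces this to finitely many pointwise instances of the lemma applied to $\widehat{\mathbf{1}_{\bbS}}$.
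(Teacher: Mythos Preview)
Your argument is correct. The paper does not supply its own proof of this lemma; it simply quotes the statement from \cite{LonOle} (Lemma~8 there) and uses it as a black box. Your proof is the standard one and almost certainly coincides with the argument in the cited reference: expand the square, use the hypothesis on each block, and kill the cross term via the Riemann--Lebesgue lemma applied to $\widehat{\mathbf{1}_{\bbS}}$, exploiting that only the finitely many offsets $\mu-\lambda$ with $\lambda\in A_{1}$, $\mu\in A_{2}$ appear. The finiteness of $A_{1}$ and $A_{2}$ is indeed the crucial point that makes the Riemann--Lebesgue decay uniform over all coefficient sequences, exactly as you note.

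One cosmetic remark: you should impose the disjointness condition $M>\max A_{1}-\min A_{2}$ at the outset rather than at the end, since the identification of a generic element of $E\left(A_{1}\cup(M+A_{2})\right)$ with $p+e^{iMt}q$ (with $p$ supported on $A_{1}$ and $q$ on $A_{2}$) already presupposes $A_{1}\cap(M+A_{2})=\emptyset$. This does not affect the substance of the proof.
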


Given a countable collection of finite sets $\left\{ A_{k}\right\} _{k\in\bbN}$
with a common lower Riesz bound, positioning them distant enough from
one another, we can take their union while keeping the Riesz sequence
property.
\begin{cor}
\label{cor:uniting blocks}Let $\bbS\subset\bbT$ with $\left|\bbS\right|>0$
and $\left\{ A_{k}\right\} _{k\in\bbN}$ be a collection of finite
sets of integers. Suppose that $E\left(A_{k}\right)$ is a Riesz sequence
in $L^{2}\left(\bbS\right)$, with lower Riesz bound $\gamma$ for
all $k$. Then there exists a sequence of integers $\left\{ M_{k}\right\} _{k\in\bbN}$
such that the system $E\left(A\right)$, where ${\displaystyle A=\bigcup_{k\in\bbN}\left(M_{k}+A_{k}\right)}$,
is a Riesz sequence in $L^{2}\left(\bbS\right)$ with lower Riesz
bound $\frac{\gamma}{2}$. 
\end{cor}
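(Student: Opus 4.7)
The plan is to iterate Lemma \ref{lem:finite} countably many times, choosing translations $M_k$ one at a time while allowing the lower Riesz bound to decrease in a controlled fashion, so that the limit bound is at least $\gamma/2$.

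Concretely, fix a strictly decreasing sequence $\gamma_k \searrow \gamma/2$ with $\gamma_1 = \gamma$, for instance $\gamma_k = \frac{\gamma}{2}\bigl(1 + 2^{-k+1}\bigr)$. Set $M_1 = 0$ and $B_1 = A_1$, which is a Riesz sequence in $L^2(\bbS)$ with lower Riesz bound $\gamma = \gamma_1$. Assume inductively that we have chosen integers $M_1, \ldots, M_k$ so that
\[
B_k := \bigcup_{j=1}^{k}\bigl(M_j + A_j\bigr)
\]
is a finite set of integers for which $E(B_k)$ has lower Riesz bound $\gamma_k$ in $L^2(\bbS)$. Since $E(A_{k+1})$ has lower Riesz bound $\gamma \geq \gamma_k$, both $E(B_k)$ and $E(A_{k+1})$ admit $\gamma_k$ as a common lower Riesz bound, so Lemma \ref{lem:finite} applied with $\gamma' = \gamma_{k+1} < \gamma_k$ furnishes an integer $M_{k+1}$ such that
\[
B_{k+1} := B_k \cup \bigl(M_{k+1} + A_{k+1}\bigr)
\]
is a Riesz sequence in $L^2(\bbS)$ with lower Riesz bound $\gamma_{k+1}$. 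This continues the induction.

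Now set $A = \bigcup_{k \in \bbN}(M_k + A_k) = \bigcup_k B_k$. Any finite sequence of coefficients $\{a_\lambda\}_{\lambda \in A}$ has finite support, which, because each $M_k + A_k$ is finite and $\{B_k\}$ is an increasing union with union $A$, must already lie in some $B_K$. The lower Riesz inequality for $E(B_K)$ then gives
\[
\norm{\sum_{\lambda \in A} a_\lambda e^{i\lambda t}}_{L^2(\bbS)}^2 \geq \gamma_K \sum_{\lambda \in A}|a_\lambda|^2 \geq \frac{\gamma}{2}\sum_{\lambda \in A}|a_\lambda|^2,
\]
since $\gamma_K > \gamma/2$ for every $K$. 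The upper Riesz bound for $E(A)$ is automatic: because $\bbS \subset \bbT$ and $A \subset \bbZ$, Parseval on $\bbT$ yields the uniform upper bound $2\pi \sum |a_\lambda|^2$.

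There is no real analytic obstacle here; the only care needed is bookkeeping. The critical point is that the \emph{lower} Riesz bound is the only quantity that degrades when applying Lemma \ref{lem:finite}, and by taking $\gamma_k$ to decrease geometrically to $\gamma/2$ rather than, say, linearly to $0$, we guarantee a uniform positive lower bound that survives the passage to the countable union. The verification that arbitrary finite coefficient sequences on $A$ are captured by some $B_K$ is what makes the limit procedure legitimate.
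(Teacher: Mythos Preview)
Your proof is correct and follows essentially the same inductive scheme as the paper: apply Lemma~\ref{lem:finite} step by step with a decreasing sequence of target bounds tending to $\gamma/2$ (the paper uses $\frac{\gamma}{2}\bigl(1+\tfrac{1}{K}\bigr)$ where you use $\frac{\gamma}{2}\bigl(1+2^{-k+1}\bigr)$), then note that the upper Riesz bound is automatic since $A\subset\bbZ$ and $\bbS\subset\bbT$. You are in fact slightly more explicit than the paper in passing from the finite unions $B_K$ to the full union $A$, but the argument is the same.
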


\begin{proof}
By induction. Take $M_{1}=0$.

Suppose we have chosen $M_{1},\ldots,M_{K}$ such that for 
\[
A\left(K\right)=\bigcup_{k=1}^{K}\left(M_{k}+A_{k}\right)
\]
the corresponding exponential system $E\left(A\left(K\right)\right)$
has lower Riesz bound $\frac{\gamma}{2}\left(1+\frac{1}{K}\right)$. 

Note that $A\left(K\right)$ and $A_{K+1}$ are both finite sets satisfying
the assumptions of Lemma \ref{lem:finite} with $\tilde{\gamma}=\frac{\gamma}{2}\left(1+\frac{1}{K}\right)$.
It follows that we can find $M_{K+1}$ such that $E\left(A\left(K+1\right)\right)$
is a Riesz sequence in $L^{2}\left(\bbS\right)$ with lower Riesz
bound $\frac{\gamma}{2}\left(1+\frac{1}{K+1}\right)<\tilde{\gamma}$.

For the upper Riesz bound, i.e. the right inequality of $\left(\ref{eq:RS}\right)$. Since $A\subset \bbZ$ and $\bbS\subset\bbT$, this inequality holds with $B=1$ and so $E(A)$ is indeed a Riesz sequence in $L^2(\bbS)$ with the desired lower bound.
\end{proof}  

\subsection{\label{subsec:Proving}Proving $\protect\bbS_{\alpha}\in\mathcal{A}\left(\alpha\right)$}

By Corollary \ref{cor:uniting blocks}, in order to prove Theorem
\ref{thm:main thm} it's enough to prove the following 
\begin{lem}
\label{lem:main lem} Let $\alpha\in\left(0,1\right)$ and $\eps\in\left(0,\nicefrac{1}{4}\right)$. For $\mathcal{S}_{\alpha}$, the set constructed in section \ref{subsec:Construction-of-the},
there exist $\mathcal{N}\in\bbN$ and $\gamma>0$, both depending on $\alpha$ and $\varepsilon$, such that for every prime $p>\mathcal{N}$, the exponential system $E\left(B_{p}^{\alpha}\right)$ is a Riesz sequence in $L^{2}\left(\bbS_{\alpha}\right)$ with lower Riesz bound $\gamma$. Here we define 
\[
B_{p}^{\alpha}:=\left\{ p,2p,\ldots,N_{p}p\right\} ,\qquad N_{p}=N\left(\alpha,p\right)=\floor{p^{\nicefrac{1}{\alpha}}}.
\]
\end{lem}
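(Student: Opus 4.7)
The plan is to reduce the Riesz inequality, via the substitution $\tau=pt$, to a weighted $L^{2}$-inequality for trigonometric polynomials of degree $N_{p}$ on $\bbT$, and then to establish a sharp measure estimate for the multiplicity function $\nu_{p,\bbS_{\alpha}}$. Since $t\mapsto pt$ is a measure-preserving $p$-to-one cover of $\bbT$, for $R(\tau):=\sum_{k=1}^{N_{p}}a_{k}e^{ik\tau}$ a direct change of variables and periodization yields
\[
\int_{\bbS_{\alpha}}\Bigl|\sum_{k=1}^{N_{p}}a_{k}e^{ikpt}\Bigr|^{2}dt\;=\;\frac{1}{p}\int_{0}^{2\pi}|R(\tau)|^{2}\,\nu_{p,\bbS_{\alpha}}(\tau/p)\,d\tau.
\]
Writing $E_{p}:=\{\tau\in\bbT:\nu_{p,\bbS_{\alpha}}(\tau/p)<(1-\eta)p\}$, it will suffice to show $|E_{p}|\le c\,p^{-1/\alpha}$ for some $\eta,c\in(0,1)$ depending only on $\alpha$ and $\eps$; combined with Nikolski this yields the desired uniform lower bound.

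Setting $J_{\ell}(\tau):=\{j\in\{0,\dots,p-1\}:(\tau+2\pi j)/p\in I_{[\ell]}\}$, so that $p-\nu_{p,\bbS_{\alpha}}(\tau/p)\le\sum_{\ell}|J_{\ell}(\tau)|$, I would decompose the sum into three ranges, exploiting the primality of $p$. For \emph{heavy} $\ell\le L_{0}:=(c_{0}p/\pi)^{\alpha}$, a direct lattice count of fiber points in the $\ell$-root neighborhoods gives $|J_{\ell}(\tau)|\le 2p\delta(\ell)/\pi$ uniformly in $\tau$, so $\sum_{\ell\le L_{0}}|J_{\ell}(\tau)|<\eps p/\pi$: a pointwise bound contributing nothing to $E_{p}$ once $\eps$ is fixed small. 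For $\ell=mp$, the congruence $\ell(\tau+2\pi j)/p\equiv m\tau\pmod{2\pi}$ forces $|J_{mp}(\tau)|\in\{0,p\}$, with the maximum occurring on a set of measure at most $2\delta(mp)$; summing over $m\ge 1$ produces a bad set $E_{B}$ of measure at most $2c_{0}\zeta(1/\alpha)/p^{1/\alpha}$, already of the required order. For \emph{light} coprime scales $\ell>L_{0}$ with $p\nmid\ell$, each $|J_{\ell}|$ is $0$ or $1$ and $\{|J_{\ell}|\ge 1\}$ has measure at most $2p\delta(\ell)$, so the cumulative counter $K(\tau):=\sum_{\text{light}}|J_{\ell}(\tau)|$ has expectation $\mathbb{E}[K]\lesssim c_{0}^{\alpha}p^{\alpha}$, which is large but asymptotically negligible compared to $p$.

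The main obstacle is to upgrade this $L^{1}$-bound on $K$ to the much sharper tail estimate $|\{K>\eta p/2\}|\lesssim p^{-1/\alpha}$; since $\alpha+1/\alpha>2$ for $\alpha\in(0,1)$, neither Markov nor Chebyshev is sufficient even assuming full independence of the indicators $X_{\ell}:=\mathbf{1}[|J_{\ell}|\ge 1]$. My plan is to bound a sufficiently high integer moment $\mathbb{E}[K^{k}]$ by a direct decorrelation argument: geometrically, $X_{\ell}=1$ means that $\tau$ lies in the $(p\delta(\ell)/\ell)$-neighborhood $U_\ell$ of the $\ell$-th roots of unity, and for pairwise coprime indices the Chinese remainder theorem separates the rational grids of distinct denominators. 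A careful count of close $k$-tuples of centers should give an almost-product estimate of the form $|U_{\ell_{1}}\cap\cdots\cap U_{\ell_{k}}|\le C^{k}\prod_{i=1}^{k}|U_{\ell_{i}}|/(2\pi)^{k-1}$, whence $\mathbb{E}[K^{k}]\le C_{k}(p^{\alpha})^{k}$, and Markov applied to $K^{k}$ gives $|\{K>\eta p/2\}|\le C_{k}\,p^{-(1-\alpha)k}$. Choosing any integer $k>1/(\alpha(1-\alpha))$ (depending only on $\alpha$) then gives the required $p^{-1/\alpha}$ decay, and combined with the $E_{B}$ bound yields $|E_{p}|\le c\,p^{-1/\alpha}$ with $c$ controlled by $c_0$.

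To conclude, Nikolski's inequality $\|R\|_{\infty}^{2}\le N_{p}\sum|a_{k}|^{2}$ together with $N_{p}\le p^{1/\alpha}$ gives $\int_{E_{p}}|R|^{2}\le |E_{p}|N_{p}\sum|a_{k}|^{2}\le c\sum|a_{k}|^{2}$. Fixing $c_{0}$ small enough at the outset so that $c<\pi$, the weight $\nu_{p,\bbS_{\alpha}}(\tau/p)/p$ is at least $1-\eta$ on $\bbT\setminus E_{p}$, and
\[
\int_{\bbS_{\alpha}}\Bigl|\sum_{k=1}^{N_p} a_{k}e^{ikpt}\Bigr|^{2}dt\;\ge\;(1-\eta)\int_{\bbT\setminus E_{p}}|R|^{2}\,d\tau\;\ge\;(1-\eta)(2\pi-c)\sum|a_{k}|^{2},
\]
yielding the uniform lower Riesz bound $\gamma:=(1-\eta)(2\pi-c)>0$ for every sufficiently large prime $p$, as required.
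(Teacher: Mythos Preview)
Your overall strategy --- reduce to the multiplicity function and then invoke the Nikolski-type bound, exactly as in the paper's Theorem~4 --- is a legitimate alternative route, but the proposal has a genuine gap in the ``light coprime'' range that cannot be repaired along the lines you sketch.

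First, two smaller points. The claim that $|J_{\ell}|\in\{0,1\}$ for every light $\ell$ is false once $\ell>p$: the $\ell$ components of $I_{[\ell]}$ are then spaced more finely than the $p$ fiber points, so several fiber points can land in $I_{[\ell]}$ simultaneously. This is not fatal (you only need control on $K=\sum_{\ell}|J_{\ell}|$), but it means $K$ is not a sum of indicators, and your moment computation has to be redone.

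The real obstacle is the ``almost-product'' decorrelation $|U_{\ell_{1}}\cap\cdots\cap U_{\ell_{k}}|\le C^{k}\prod|U_{\ell_{i}}|/(2\pi)^{k-1}$. This fails drastically for non-coprime indices: if $\ell_{2}=2\ell_{1}$ (both light, both coprime to $p$), then half of the components of $U_{\ell_{2}}$ are centred at $\ell_{1}$-th roots and are therefore contained in $U_{\ell_{1}}$, so $|U_{\ell_{1}}\cap U_{\ell_{2}}|\ge\tfrac{1}{2}|U_{\ell_{2}}|$, which is enormously larger than $|U_{\ell_{1}}|\,|U_{\ell_{2}}|/(2\pi)$. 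The Chinese remainder theorem only separates grids with pairwise coprime denominators, and there is no reduction of the light range to that situation. Consequently the bound $\mathbb{E}[K^{k}]\le C_{k}p^{\alpha k}$ is unsupported, and the Markov step collapses.

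The paper's proof avoids exactly these difficulties, and it is worth seeing how. It works not with $I_{[\ell]}$ but with the sets $J_{[\ell]}$ obtained by keeping only $j$ with $\gcd(j,\ell)=1$; this makes all the relevant centres $2\pi j/\ell$ genuinely distinct reduced rationals. Then, instead of moments, it proves a pointwise Diophantine counting lemma (for the function $M_{\rho}$) which shows directly that the number of overlapping $\mathcal{J}_{p,[\ell]}$'s at any point is $\le CN_{p}^{\,2-1/\alpha}=o(p)$ for $\ell\in[c_{0}p,N_{p}]$ --- a much stronger statement than any tail bound. Finally, for $\ell>N_{p}$ with $p\nmid\ell$ the paper does not attempt any geometric control of $|J_{\ell}|$ at all: it uses the algebraic vanishing $\widehat{f_{\ell}}\bigl(p(k'-k)\bigr)=0$ to get $\int_{I_{[\ell]}}|Q_{N_{p}}(pt)|^{2}dt=|I_{[\ell]}|$ exactly. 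This last step has no analogue in your multiplicity-function framework, and it is essential, since for $\alpha$ close to $1$ the crude bound $\sum_{\ell>N_{p}}|U_{\ell}|$ diverges as $p\to\infty$.
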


Throughout this section, $\alpha$ and $\eps$ will be fixed. Let $c_{0}$ be the constant chosen in Section \ref{subsec:Construction-of-the}, and let 
\[
Q_{N_{p}}\left(t\right)=\sum_{k=1}^{N_{p}}a\left(k\right)e^{ikt}\quad,\quad\sum_{k=1}^{N_{p}}\left|a\left(k\right)\right|^{2}=1
\]
so that $Q_{N_{p}}\left(pt\right)$ has spectrum in $B_{p}^{\alpha}$.
We prove Lemma \ref{lem:main lem} by bounding from above each of
the integrals 
\[
\intop_{J_{\left[\ell\right]}}\left|Q_{N_{p}}\left(pt\right)\right|^{2}dt,\qquad\ell\in\bbN
\]
separately. Given that our estimates sum up to a quantity smaller than
1 (in fact, it will depend on $\eps$ and $\mathcal{N}$), we will
write
\[
\intop_{\bbS_{\alpha}}\left|Q_{N_{p}}\left(pt\right)\right|^{2}dt\geq1-\sum_{\ell}\intop_{J_{\left[\ell\right]}}\left|Q_{N_{p}}\left(pt\right)\right|^{2}dt.
\]

For convenience we shall denote $f_{\ell}\left(t\right)=\mathbf{\mathbf{\mathds{1}}}_{I_{\left[\ell\right]}}\left(t\right)$
the characteristic function of the set $I_{\left[\ell\right]}$. Notice
that since $f_{\ell}$ is a $\frac{2\pi}{\ell}$-periodic function,
we have 
\begin{equation}\label{eq:nonzero coeff}
\widehat{f_{\ell}}\left(n\right)\neq0\Rightarrow n\in\ell\bbZ.
\end{equation}
We will require the following lemmas.

\begin{lem}
	Given $\alpha\in\left(0,1\right)$ and a prime number $p$, we have
	\[
	\sum_{\ell=1}^{\floor{c_{0}p}}\intop_{J_{\left[\ell\right]}}\left|Q_{N_{p}}\left(pt\right)\right|^{2}dt\leq \sum_{\ell=1}^{\floor{c_{0}p}}\left|I_{\left[\ell\right]}\right|+2c_{0}.	
	\]
\end{lem}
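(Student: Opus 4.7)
\emph{Proof proposal.} The plan is to reduce to the larger set $I_{\left[\ell\right]}$ (since $J_{\left[\ell\right]}\subset I_{\left[\ell\right]}$) and then exploit the nice periodic structure of $I_{\left[\ell\right]}$ against that of $|Q_{N_{p}}(pt)|^{2}$. Concretely, I would perform the change of variables $\tau=pt$ and use the $2\pi$-periodicity of $|Q_{N_{p}}|^{2}$ to fold $[0,2\pi p)$ back onto $[0,2\pi)$, rewriting the integral as
\[
\int_{I_{\left[\ell\right]}}|Q_{N_{p}}(pt)|^{2}\,dt \;=\; \frac{1}{p}\int_{0}^{2\pi}|Q_{N_{p}}(\tau)|^{2}\,h_{\ell}(\tau)\,d\tau,
\]
where $h_{\ell}(\tau):=\#\{k\in\{0,1,\dots,p-1\}\colon\tau+2\pi k\in pI_{\left[\ell\right]}\}$ is a multiplicity function counting the lifts of $\tau$ into $pI_{\left[\ell\right]}\subset[0,2\pi p)$.

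The key step, and the main obstacle, is a uniform pointwise upper bound on $h_{\ell}$. Unpacking the decomposition $I_{\left[\ell\right]}=\bigsqcup_{j=0}^{\ell-1}\bigl(\tfrac{2\pi j}{\ell}+\mathcal{I}_{\ell}\bigr)$, the membership $\tau+2\pi k\in pI_{\left[\ell\right]}$ is equivalent (after multiplying through by $\ell$) to the integer $m:=k\ell-pj$ lying in a specific open interval of length $\tfrac{p\delta(\ell)}{\pi}$ whose location depends on $\tau$. This is where the primality assumption becomes decisive: since $p$ is prime and $\ell\leq\floor{c_{0}p}<p$, we have $\gcd(p,\ell)=1$, and a short argument shows that $(k,j)\mapsto k\ell-pj$ is injective on $\{0,\dots,p-1\}\times\{0,\dots,\ell-1\}$. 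Hence $h_{\ell}(\tau)$ is majorized by the number of integers in that interval, giving $h_{\ell}(\tau)\leq\tfrac{p\delta(\ell)}{\pi}+1$ uniformly in $\tau$.

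With this uniform bound in hand, the proof finishes routinely. Substituting into the displayed identity and using $\int_{0}^{2\pi}|Q_{N_{p}}(\tau)|^{2}\,d\tau=2\pi$ (since $\sum|a(k)|^{2}=1$) yields a per-$\ell$ estimate of the shape $|I_{\left[\ell\right]}|+O(1/p)$, and summing over $1\leq\ell\leq\floor{c_{0}p}$ collects an error of order $c_{0}$, producing the desired inequality. The only nonroutine ingredient is the combinatorial counting bound on $h_{\ell}$ via the primality of $p$; everything else is change of variables, periodization, and Parseval.
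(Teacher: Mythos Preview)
Your proposal is correct and follows essentially the same route as the paper: enlarge $J_{[\ell]}$ to $I_{[\ell]}$, substitute $\tau=pt$, and use $\gcd(p,\ell)=1$ (valid since $\ell\le\lfloor c_{0}p\rfloor<p$) to bound the overlap multiplicity by $O(p\,\delta(\ell))+O(1)$, then sum over $\ell$. The paper phrases the coprimality step as the bijectivity of $j\mapsto pj$ on $\mathbb{Z}/\ell\mathbb{Z}$ and counts how many of the translated intervals $\tfrac{2\pi j}{\ell}+p\mathcal{I}_{\ell}$ cover a given point, whereas you phrase it as injectivity of $(k,j)\mapsto k\ell-pj$; these are the same computation, and the only caveat is that in this paper's normalization $\int_{\mathbb{T}}|Q_{N_{p}}|^{2}\,d\tau=1$ rather than $2\pi$, so your constants should be adjusted accordingly to land exactly on $\sum|I_{[\ell]}|+2c_{0}$.
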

\begin{proof}
For $1\leq\ell\leq\floor{c_{0}p}$ we write 
\[
\intop_{J_{\left[\ell\right]}}\left|Q_{N_{p}}\left(pt\right)\right|^{2}dt\leq\intop_{I_{\left[\ell\right]}}\left|Q_{N_{p}}\left(pt\right)\right|^{2}dt=\intop_{pI_{\left[\ell\right]}}\left|Q_{N_{p}}\left(\tau\right)\right|^{2}\frac{d\tau}{p}
\]
We use the following key observation: considering the change of variables
$pt=\tau$ as above, we notice that if, as in (\ref{eq:set I_l}),
we identify $I_{\left[\ell\right]}$ as a subset of $\left[0,2\pi\right)$
which is a union of $\ell$ disjoint intervals (counting $\left[0,\nicefrac{\delta\left(\ell\right)}{\ell}\right)\cup\left(2\pi-\nicefrac{\delta\left(\ell\right)}{\ell},2\pi\right)$
as one interval), each of length $\frac{2\delta\left(\ell\right)}{\ell}$,
and centered at points $0,\frac{2\pi}{\ell},\ldots\frac{2\pi\left(\ell-1\right)}{\ell}$.
We get that $pI_{\left[\ell\right]}$ is a subset of $\left[0,2\pi p\right)$
composed of $\ell$ disjoint intervals, each of length $p\frac{2\delta\left(\ell\right)}{\ell}$,
and centered at points $0,\frac{2\pi p}{\ell},\ldots\frac{2\pi p\left(\ell-1\right)}{\ell}$. 

Since the mapping from $\bbZ/\ell\bbZ$ to itself defined by $j\mapsto pj$
is an isomorphism (this is so since $p$ is invertible $\mod\ell$),
residues modulo $\ell$ are mapped to themselves under this mapping
and so we conclude that there exists a permutation $\sigma$ of $\left\{ 0,1,\ldots,\ell-1\right\} $
such that $\sigma\left(0\right)=0$ and 
\[
 \frac{pj}{\ell} \equiv \frac{\sigma\left(j\right)}{\ell} \text{ mod 1},\quad j\in\left\{ 1,\ldots,\ell-1\right\} .
\]
Since $Q_{N_{p}}$ is a trigonometric polynomial of period $2\pi$,
we get 
\[
\intop_{\frac{2\pi jp}{\ell}+p\mathcal{I}_{\ell}}\left|Q_{N_{p}}\left(t\right)\right|^{2}dt=\intop_{\frac{2\pi\sigma\left(j\right)}{\ell}+p\mathcal{I}_{\ell}}\left|Q_{N_{p}}\left(t\right)\right|^{2}dt,\;j\in\left\{ 0,1,\ldots,\ell-1\right\} .
\]
By the definition of $\sigma$ as a permutation we have 
\[
\mathcal{U_\ell}=\left\{\frac{2\pi\sigma\left(j\right)}{\ell}+p\mathcal{I}_{\ell}\,\bigg|\,j=0,\ldots,\ell-1\right\}=\left\{\frac{2\pi j}{\ell}+p\mathcal{I}_{\ell}\,\bigg|\,j=0,\ldots,\ell-1\right\}.
\]
We notice that the collection of intervals in $\mathcal{U}_\ell$ covers every point of the circle at most $\floor{2p\delta\left(\ell\right)}+2$ times. Indeed let $t\in\bbT$, since all the intervals are of the same length and their centers are evenly spaced on the circle, we have
\[
\#\left\{j\in\{0,\ldots,\ell\}\,\bigg|\,t\in\left(\frac{2\pi j}{\ell}+p\mathcal{I}_{\ell}\right)\right\}=
\]
\[
=\#\left\{j\in\{0,\ldots,\ell\}\,\bigg|\,\frac{2\pi j}{\ell}\in\left( t+p\mathcal{I}_{\ell}\right)\right\}\leq \floor{\ell\cdot\frac{\left|p\mathcal{I}_\ell\right|}{2\pi}}+2=\floor{2p\delta\left(\ell\right)}+2.
\]
From this we get 
\[
\intop_{pI_{\left[\ell\right]}}\left|Q_{N_{p}}\left(\tau\right)\right|^{2}\frac{d\tau}{p}\leq\frac{1}{p}\left(\floor{2p\delta\left(\ell\right)}+2\right)\leq2\delta\left(\ell\right)+\frac{2}{p}.
\]
summing over all $\ell$ in the current range, we get 
\[
\sum_{\ell=1}^{\floor{c_{0}p}}\intop_{J_{\left[\ell\right]}}\left|Q_{N_{p}}\left(pt\right)\right|^{2}dt\leq\sum_{\ell=1}^{\floor{c_{0}p}}\intop_{pI_{\left[\ell\right]}}\left|Q_{N_{p}}\left(\tau\right)\right|^{2}\frac{d\tau}{p}\leq
\]

\begin{equation}
\leq\sum_{\ell=1}^{\floor{c_{0}p}}\left(2\delta\left(\ell\right)+\frac{2}{p}\right)<\sum_{\ell=1}^{\floor{c_{0}p}}\left|I_{\left[\ell\right]}\right|+2c_{0}.\label{eq:estimate 1}
\end{equation}
\end{proof}
Denote $A=\left\{ \ceiling{c_{0}p}\leq\ell\leq N_{p}\,|\,p\nmid\ell\right\} $.
\begin{lem}\label{lem:main}
Given $\alpha$ and $\eps$, there exists $\mathcal{N}\in\bbN$ such that for any prime number $p>\mathcal{N}$ we have 
\[
	\sum_{\ell\in A}\intop_{J_{\left[\ell\right]}}\left|Q_{N_{p}}\left(pt\right)\right|^{2}dt\leq \eps
\]
\end{lem}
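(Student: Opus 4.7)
The plan is to apply Plancherel to reduce $T_\ell=\int_{J_{[\ell]}}|Q_{N_p}(pt)|^2\,dt$ into a tractable main term plus a Fourier error, then sum over $\ell\in A$, using the tail of $\delta(\ell)=c_0\ell^{-1/\alpha}$ for the former and a careful Ramanujan-sum estimate for the latter.

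\emph{Step 1 (change of variables and geometry).} Since $\gcd(\ell,p)=1$ for every $\ell\in A$, the map $r\mapsto pr\pmod{\ell}$ permutes the reduced residues modulo $\ell$. Combined with $p\delta(\ell)\leq c_0^{1-1/\alpha}p^{1-1/\alpha}$, which tends to $0$ uniformly for $\ell\geq c_0p$ (as $\alpha<1$), this ensures that after the substitution $\tau=pt$ and folding $[0,2\pi p)$ onto $\mathbb{T}$, the image $pJ_{[\ell]}\pmod{2\pi}$ is a \emph{disjoint} union $J'_{[\ell,p]}$ of $\phi(\ell)$ intervals of length $2p\delta(\ell)/\ell$ centered at $\{2\pi r/\ell:\gcd(r,\ell)=1\}$. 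Thus $T_\ell=\frac{1}{p}\int_{J'_{[\ell,p]}}|Q_{N_p}|^2\,d\tau$.

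\emph{Step 2 (Plancherel expansion).} The Fourier coefficients of $\mathbf{1}_{J'_{[\ell,p]}}$ factor as a Ramanujan sum times a sinc: $\widehat{\mathbf{1}_{J'_{[\ell,p]}}}(m)=c_\ell(m)\,\widehat{\mathbf{1}_{p\mathcal{I}_\ell}}(m)$. Writing $|Q_{N_p}(\tau)|^2=\sum_mb(m)e^{im\tau}$ with $b(m)=\sum_{j-k=m}a(j)\overline{a(k)}$, $|b(m)|\leq b(0)=1$, Plancherel gives
\[
T_\ell=|J_{[\ell]}|+E_\ell,\qquad E_\ell=\frac{2}{p}\sum_{m\neq 0}c_\ell(m)\,\overline{b(m)}\,\frac{\sin\bigl(mp\delta(\ell)/\ell\bigr)}{m}.
\]
Summing the main terms is cheap: $\sum_{\ell\in A}|J_{[\ell]}|=\sum_{\ell\in A}2\phi(\ell)\delta(\ell)/\ell\leq 2c_0\sum_{\ell\geq c_0p}\ell^{-1/\alpha}$, which is the tail of a convergent series (recall $1/\alpha>1$) and can be forced below $\varepsilon/2$ once $p>\mathcal{N}_1(\alpha,\varepsilon)$.

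\emph{Step 3 (error term --- main obstacle).} The hard part is controlling $\sum_{\ell\in A}E_\ell$ uniformly in the coefficients $\{a(k)\}$. The natural move is to swap the $m$- and $\ell$-summations, $\sum_{\ell}E_\ell=\frac{2}{p}\sum_{m\neq 0}\overline{b(m)}D(m)$, and estimate $D(m)=\sum_{\ell\in A}c_\ell(m)\sin(mp\delta(\ell)/\ell)/m$ by splitting the $m$-range at the transition $|m|\sim\ell/(p\delta(\ell))$, using $|\sin(mp\delta/\ell)/m|\leq\min(p\delta(\ell)/\ell,\,1/|m|)$, the bound $|c_\ell(m)|\leq\gcd(\ell,m)$, and grouping the $\ell$'s by divisors of $m$. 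A Cauchy--Schwarz step together with $\sum_m|b(m)|^2\leq\|Q_{N_p}\|_\infty^2\leq N_p$ and the mean-square identity $\sum_{m=0}^{\ell-1}c_\ell(m)^2=\ell\phi(\ell)$ is available. The technical difficulty is that the na\"ive pointwise bound $\|Q_{N_p}\|_\infty^2\leq N_p=\lfloor p^{1/\alpha}\rfloor$ is too crude --- any approach that just uses it multiplied against $|J'_{[\ell,p]}|$ produces powers of $p$ that do not cancel. One must genuinely exploit the cancellation offered by the Ramanujan sums, the restricted range $c_0p\leq\ell\leq N_p$, and the fact that the transition point $\ell/(p\delta(\ell))$ lies well inside $[1,N_p]$, so that both the short-$m$ regime (controlled by $p\delta(\ell)/\ell$) and the long-$m$ regime (controlled by $1/|m|$ and the dyadic decay of $\sum c_\ell^2/m^2$) contribute a total $O(\delta(\ell)\phi(\ell)/\ell)$ error per $\ell$, whose sum once again is a summable tail in $(c_0p)^{1-1/\alpha}$. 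Choosing $\mathcal{N}\geq\mathcal{N}_1$ large enough to also absorb this tail below $\varepsilon/2$ completes the proof.
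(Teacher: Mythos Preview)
Your Steps 1 and 2 are correct and the main-term estimate is fine, but Step 3 is a genuine gap, not a proof. You identify the difficulty accurately and then assert the outcome --- that the error per $\ell$ is $O(\delta(\ell)\phi(\ell)/\ell)$ --- without actually deriving it. In fact the Ramanujan-sum route you sketch does not close. Since the transition point $\ell/(p\delta(\ell))=\ell^{1+1/\alpha}/(c_0p)$ satisfies $\ell/(p\delta(\ell))\geq (c_0p)^{1/\alpha}\gtrsim N_p$ for every $\ell\in A$, the entire spectrum of $|Q_{N_p}|^2$ lies in your ``short-$m$'' regime. Using $|\sin(mp\delta(\ell)/\ell)/m|\leq p\delta(\ell)/\ell$ and $|b(m)|\leq 1$ gives $|E_\ell|\leq\frac{2\delta(\ell)}{\ell}\sum_{0<|m|<N_p}|c_\ell(m)|$; already for prime $\ell$ this inner sum is of order $N_p$, so $\sum_{\ell\in A}|E_\ell|\lesssim N_p\sum_{\ell\geq c_0p}\delta(\ell)/\ell\asymp c_0^{1-1/\alpha}$, which \emph{blows up} as $c_0\to 0$ since $1-1/\alpha<0$. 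The Cauchy--Schwarz alternative you mention, paired with $\sum|b(m)|^2\leq N_p$, suffers the same loss. Any gain must therefore come from cancellation in $m$ against the arbitrary weights $b(m)$, but $b(m)$ is adversarial --- there is no such cancellation to exploit.

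The paper avoids this entirely by never separating a main term from an error. It writes $\sum_{\ell\in A}T_\ell=\frac{1}{p}\int_{\mathbb{T}}|Q_{N_p}(\tau)|^2\bigl(\sum_{\ell\in A}\mathbf{1}_{\mathcal{J}_{p,[\ell]}}(\tau)\bigr)\,d\tau$ and bounds the multiplicity $\sum_{\ell\in A}\mathbf{1}_{\mathcal{J}_{p,[\ell]}}(\tau)$ pointwise, uniformly in $\tau$. For $\alpha<\tfrac12$ it shows (via a spacing argument on Farey fractions) that the range $A$ can be split into a bounded number $d=d(\alpha)$ of sub-ranges on each of which the sets $\mathcal{J}_{p,[\ell]}$ are pairwise disjoint, yielding the bound $(d-1)/p$. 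For $\alpha\in[\tfrac12,1)$ it proves that the number of reduced fractions $j/\ell$ with $\ell\leq N$ and $|\tau/2\pi-j/\ell|<\ell^{-1-\rho}$ is at most $CN^{1-\rho}$ (a short dyadic argument), and applies this with $\rho=1/\alpha-1$ to get the bound $CN_p^{2-1/\alpha}/p=Cp^{-(1-\alpha)^2/\alpha^2}=o(1)$. Either way the polynomial $Q_{N_p}$ is integrated against a function bounded by $o(p)$ and the $1/p$ prefactor finishes the job. This geometric overlap count is the missing idea in your argument.
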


The proof of the Lemma \ref{lem:main} will be divided to two cases: $\alpha\in\left(0,\nicefrac{1}{2}\right)$ and $\alpha\in\left[\nicefrac{1}{2},1\right)$.
We begin with some intuition. Write 
\[
\sum_{\ell\in A}\intop_{J_{\left[\ell\right]}}\left|Q_{N_{p}}\left(pt\right)\right|^{2}dt=\sum_{\ell\in A}\intop_{pJ_{\left[\ell\right]}}\left|Q_{N_{p}}\left(\tau\right)\right|^{2}\frac{d\tau}{p}
\]
by the same reasoning as we had for the previous range of $\ell$'s, since for all $\ell\in A$ we have $\gcd\left(\ell,p\right)=1$, we can conclude
that 
\begin{equation}
\sum_{\ell\in A}\intop_{pJ_{\left[\ell\right]}}\left|Q_{N_{p}}\left(\tau\right)\right|^{2}\frac{d\tau}{p}=\frac{1}{p}\sum_{\ell\in A}\intop_{\mathcal{J}_{p,\left[\ell\right]}}\left|Q_{N_{p}}\left(\tau\right)\right|^{2}d\tau\label{eq:Var change}
\end{equation}
where
\[
\mathcal{J}_{p,\left[\ell\right]}=\bigcup_{\underset{\gcd\left(j,\ell\right)=1}{j=1}}^{\ell-1}\left(\frac{2\pi j}{\ell}+p\mathcal{I}_{\ell}\right).
\]
Notice that the sets $\mathcal{J}_{p,\left[\ell\right]}$ may overlap,
and so one way of estimating the integrals in the above sum is by
bounding, for every $\tau\in\bbT$, the number of sets $\left\{ \mathcal{J}_{p,\left[\ell\right]}\right\} _{\ell\in A}$
to which $\tau$ belongs. We will show that when $0<\alpha<\nicefrac{1}{2}$
this number is bounded independently of $p$, i.e. every $\tau$ belongs
to at most $d$ of the sets $\left\{ \mathcal{J}_{p,\left[\ell\right]}\right\} _{\ell\in A}$,
and $d$ depends only on $\alpha$. While for $\nicefrac{1}{2}\leq\alpha<1$
we shall prove that even though this number depends on $p$, it grows
slow enough to allow control over the latter sum. Moreover, when $\alpha\in\left(0,\nicefrac{1}{2}\right)$
note that 
\[
\sum_{\ell\in A}\left|\mathcal{J}_{p,\left[\ell\right]}\right|\leq p\sum_{\ell=\ceiling{c_{0}p}}^{N_{p}}\left|I_{\left[\ell\right]}\right|=c_{0}p\sum_{\ell=\ceiling{c_{0}p}}^{N_{p}}\frac{1}{\ell^{\nicefrac{1}{\alpha}}}\leq c_{0}p\frac{1}{\left(c_{0}p\right)^{\left(\nicefrac{1}{\alpha}-1\right)}}=o\left(1\right)\text{ as } p\rightarrow\infty
\]
This observation should serve as an intuitive justification for
Lemma \ref{lem:disjoint} below.

In addition, for every $\ell\in A$ the intervals forming the set
$\mathcal{J}_{p,\left[\ell\right]}$ are pairwise disjoint, that is
\begin{equation}
\left(\frac{2\pi j_{1}}{\ell}+p\mathcal{I}_{\ell}\right)\cap\left(\frac{2\pi j_{2}}{\ell}+p\mathcal{I}_{\ell}\right)=\emptyset\text{ for all }1\leq j_{1}<j_{2}\leq\ell-1.\label{eq:3.6}
\end{equation}
Indeed $\left(\ref{eq:3.6}\right)$ holds if and only if
\[
\frac{p\delta\left(\ell\right)}{\ell}\leq\frac{\pi}{\ell}\iff\left(cp\right)^{\alpha}\leq\ell,
\]
which holds for all $\ell\in A$. Moreover, the following holds true.
\begin{lem}
\label{lem:disjoint}Let $\alpha\in\left(0,\nicefrac{1}{2}\right),\eta>\frac{\alpha}{1-\alpha}$
and set $\eta^{\prime}=\frac{2}{1+\alpha}\left(\eta-\nicefrac{\alpha}{2}\right)$.
Then for all 
\begin{equation}
p^{\eta}\leq\ell_{1}<\ell_{2}\leq p^{\eta^{\prime}}\,,\,p\nmid\ell_{1},\ell_{2}\label{eq:cond}
\end{equation}
we have $\mathcal{J}_{p,\left[\ell_{1}\right]}\cap\mathcal{J}_{p,\left[\ell_{2}\right]}=\emptyset$.
\end{lem}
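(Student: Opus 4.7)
I would argue by contradiction: from a hypothetical common point $t\in\mathcal{J}_{p,[\ell_1]}\cap\mathcal{J}_{p,[\ell_2]}$, extract two reduced fractions $j_1/\ell_1$ and $j_2/\ell_2$ (with $\gcd(j_i,\ell_i)=1$) whose distance on $\bbT$ is simultaneously forced to be small, through the diameters of the two containing intervals, and bounded from below, through the arithmetic of distinct reduced fractions with different denominators. The incompatibility of these two estimates under the stated exponent constraints is exactly what selects $\eta'$.

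For the upper bound I would choose $j_1,j_2$ so that $t\in 2\pi j_i/\ell_i+p\mathcal{I}_{\ell_i}$ and apply the triangle inequality on $\bbT$ to conclude that the distance between $2\pi j_1/\ell_1$ and $2\pi j_2/\ell_2$ is less than $p\delta(\ell_1)/\ell_1+p\delta(\ell_2)/\ell_2$, which by $\ell_1<\ell_2$ and the monotonicity of $\ell\mapsto\delta(\ell)/\ell=c_0/\ell^{1+1/\alpha}$ is at most $2pc_0/\ell_1^{1+1/\alpha}$. The main step is the matching lower bound $2\pi/(\ell_1\ell_2)$: since $\gcd(j_i,\ell_i)=1$ and $\ell_1\ne\ell_2$, the fractions $j_1/\ell_1,j_2/\ell_2\in(0,1)$ are distinct (reduced fractions with different denominators cannot be equal), so their difference $(j_1\ell_2-j_2\ell_1)/(\ell_1\ell_2)$ has nonzero integer numerator. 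The only delicate point, and essentially the one non-routine observation of the proof, is the wrap-around check on $\bbT$: every nonzero element of $(1/(\ell_1\ell_2))\bbZ/\bbZ$ has distance at least $1/(\ell_1\ell_2)$ from $0$, which immediately yields the claimed lower bound.

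Combining the two estimates gives $\ell_1^{1/\alpha}<pc_0\ell_2/\pi$. Substituting $\ell_1\ge p^\eta$ and $\ell_2\le p^{\eta'}$ together with $c_0<\eps<\pi$, this becomes $p^{\eta/\alpha-1-\eta'}<1$, which fails for every prime $p\ge 2$ provided $\eta/\alpha-1-\eta'>0$. A short algebraic check shows that for the prescribed $\eta'=(2\eta-\alpha)/(1+\alpha)$, the condition $\eta/\alpha>1+\eta'=(1+2\eta)/(1+\alpha)$ is equivalent to $\eta(1-\alpha)>\alpha$, that is, to the standing hypothesis $\eta>\alpha/(1-\alpha)$. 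This closes the contradiction, so no additional machinery beyond elementary manipulation of fractions on $\bbT$ is needed.
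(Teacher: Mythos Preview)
Your proof is correct and follows essentially the same route as the paper: both arguments compare the sum of the half-widths $p\delta(\ell_i)/\ell_i$ of the intervals against the lower bound $2\pi/(\ell_1\ell_2)$ coming from the fact that $j_1\ell_2-j_2\ell_1$ is a nonzero integer, and then reduce the resulting inequality to an exponent condition on $\eta,\eta'$. The only cosmetic difference is that the paper keeps both terms $\ell_2/\ell_1^{1/\alpha}+\ell_1/\ell_2^{1/\alpha}$ symmetrically (which is what produces the exact formula $\eta'=\tfrac{2}{1+\alpha}(\eta-\alpha/2)$), whereas you dominate both by $2c_0/\ell_1^{1+1/\alpha}$ and then simply verify that the prescribed $\eta'$ satisfies the resulting (a priori looser) condition $\eta'<\eta/\alpha-1$; your explicit remark about the wrap-around on $\bbT$ is a point the paper leaves implicit.
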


\begin{proof}
Fix $\alpha\in\left(0,\nicefrac{1}{2}\right),\eta>\frac{\alpha}{1-\alpha}$
and let $\ell_{1},\ell_{2}$ satisfy $\left(\ref{eq:cond}\right)$.
First notice that 
\[
\eta^{\prime}=\frac{2}{1+\alpha}\left(\eta-\nicefrac{\alpha}{2}\right)>\eta\iff\eta>\frac{\alpha}{1-\alpha}.
\]
Next, the sets $\mathcal{J}_{p,\left[\ell_{1}\right]}$ and $\mathcal{J}_{p,\left[\ell_{2}\right]}$
are disjoint if and only if for all $j_{k}\in\left\{ 1,\ldots\ell_{k}-1\right\} $
with $\gcd\left(j_{k},\ell_{k}\right)=1$, for $k=1,2$, we have 
\[
\frac{p\delta\left(\ell_{1}\right)}{\ell_{1}}+\frac{p\delta\left(\ell_{2}\right)}{\ell_{2}}\leq2\pi\left|\frac{j_{1}}{\ell_{1}}-\frac{j_{2}}{\ell_{2}}\right|=2\pi\frac{\left|j_{1}\ell_{2}-j_{2}\ell_{1}\right|}{\ell_{1}\ell_{2}},
\]
since $\left|j_{1}\ell_{2}-j_{2}\ell_{1}\right|$ is a positive integer,
it's enough to verify 
\[
\frac{p\delta\left(\ell_{1}\right)}{\ell_{1}}+\frac{p\delta\left(\ell_{2}\right)}{\ell_{2}}\leq\frac{2\pi}{\ell_{1}\ell_{2}}\iff\frac{c_{0}}{2\pi}p\left(\frac{\ell_{2}}{\ell_{1}^{\nicefrac{1}{\alpha}}}+\frac{\ell_{1}}{\ell_{2}^{\nicefrac{1}{\alpha}}}\right)\leq1.
\]
By plugging the assumption $\left(\ref{eq:cond}\right)$, we get that
\[
c_{0}p\frac{\ell_{2}^{1+\nicefrac{1}{\alpha}}+\ell_{1}^{1+\nicefrac{1}{\alpha}}}{\left(\ell_{1}\ell_{2}\right)^{\nicefrac{1}{\alpha}}}\leq c_{0}p^{\eta^{\prime}\left(1+\nicefrac{1}{\alpha}\right)+1-\nicefrac{2\eta}{\alpha}}\leq c_{0},
\]
and the last inequality holds true whenever
\[
\eta^{\prime}\left(1+\nicefrac{1}{\alpha}\right)+1\leq\nicefrac{2\eta}{\alpha}\iff\eta^{\prime}\leq\frac{2}{1+\alpha}\left(\eta-\nicefrac{\alpha}{2}\right).
\]
\end{proof}
As a corollary, we can break the range $A$ into finitely many sub-ranges,
so that within every sub-range the sets $\mathcal{J}_{p,\left[\ell\right]}$
are pairwise disjoint.
\begin{cor}
\label{cor: disjoint}Let $\alpha\in\left(0,\nicefrac{1}{2}\right)$.
Then there exist  positive integers $\mathcal{N}$, $d=d\left(\alpha\right)$ and
a sequence $\eta_{1}<\eta_{2}<\ldots<\eta_{d}$ satisfying 
\end{cor}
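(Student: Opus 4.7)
The plan is to iterate Lemma~\ref{lem:disjoint} starting from an admissible base exponent $\eta_1$, and verify that the resulting sequence escapes past $1/\alpha$ in a bounded number of steps depending only on $\alpha$.

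First I would set $\eta_1=1$. This lies in the admissible range of Lemma~\ref{lem:disjoint} because $\alpha\in(0,\nicefrac{1}{2})$ implies $\alpha/(1-\alpha)<1=\eta_1$. Given $\eta_k$, define
\[
\eta_{k+1}=\frac{2}{1+\alpha}\left(\eta_k-\frac{\alpha}{2}\right),
\]
which is precisely the partner exponent produced by the lemma. Hence for any $\ell_1<\ell_2$ in $[p^{\eta_k},p^{\eta_{k+1}}]$ with $p\nmid\ell_1,\ell_2$, the sets $\mathcal{J}_{p,[\ell_1]}$ and $\mathcal{J}_{p,[\ell_2]}$ are disjoint.

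Next I would show the recursion terminates in a uniform number of steps. Subtracting the fixed point $\alpha/(1-\alpha)$ from both sides gives
\[
\eta_{k+1}-\frac{\alpha}{1-\alpha}=\frac{2}{1+\alpha}\left(\eta_k-\frac{\alpha}{1-\alpha}\right),
\]
so by induction $\eta_k-\alpha/(1-\alpha)=\left(\frac{2}{1+\alpha}\right)^{k-1}\cdot\frac{1-2\alpha}{1-\alpha}$. Both factors on the right are positive since $\alpha<\nicefrac{1}{2}$, and the base $\frac{2}{1+\alpha}$ exceeds $1$, so the sequence $\{\eta_k\}$ strictly increases and diverges to $+\infty$. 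I would then let $d=d(\alpha)$ be the least index with $\eta_d\geq 1/\alpha$; this $d$ depends only on $\alpha$.

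Finally I would pick $\mathcal{N}$ large enough so that for every prime $p>\mathcal{N}$ one has $\lceil c_0 p\rceil\leq p=p^{\eta_1}$ (which needs only $p(1-c_0)\geq 1$, automatic from $c_0<1$) and $\floor{p^{1/\alpha}}\leq p^{\eta_d}$. Then the intervals $[p^{\eta_k},p^{\eta_{k+1}}]$ for $k=1,\ldots,d-1$ cover the entire range $A$, and within each sub-range pairwise disjointness of the sets $\mathcal{J}_{p,[\ell]}$ is supplied by Lemma~\ref{lem:disjoint}. The main obstacle is keeping $d$ independent of $p$—handled by the geometric-growth computation above—together with ensuring $\eta_{k+1}>\eta_k$ at every step, which is automatic once $\eta_1$ is chosen strictly above the fixed point $\alpha/(1-\alpha)$.
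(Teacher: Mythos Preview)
Your iteration of Lemma~\ref{lem:disjoint} and the computation showing geometric growth of $\eta_k-\alpha/(1-\alpha)$ match the paper exactly, and the argument that $d$ depends only on $\alpha$ is fine. However, the choice $\eta_1=1$ does not work, because you have condition~(1) backwards.

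The statement requires $p^{\eta_1}<c_0 p$; this is what guarantees that the sub-ranges $[p^{\eta_k},p^{\eta_{k+1}}]$ actually cover $A=\{\lceil c_0 p\rceil\le\ell\le N_p:\ p\nmid\ell\}$ from the left. With $\eta_1=1$ this reads $p<c_0 p$, i.e.\ $c_0>1$, which is impossible since $c_0<\eps<\nicefrac14$. Concretely, your union $\bigcup_{k=1}^{d-1}[p^{\eta_k},p^{\eta_{k+1}}]$ begins at $p^{\eta_1}=p$, but $A$ begins at $\lceil c_0 p\rceil$, and all integers $\ell$ with $\lceil c_0 p\rceil\le\ell<p$ lie in $A$ (they are automatically coprime to $p$) yet are not covered. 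The inequality $\lceil c_0 p\rceil\le p$ that you verify is true but irrelevant; you need the reverse containment.

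The fix is to choose $\eta_1\in\bigl(\tfrac{\alpha}{1-\alpha},1\bigr)$, which is nonempty precisely because $\alpha<\nicefrac12$. Then $p^{\eta_1}/p=p^{\eta_1-1}\to0$, so $p^{\eta_1}<c_0 p$ holds for all $p>\mathcal{N}$ once $\mathcal{N}$ is large enough, while $\eta_1>\alpha/(1-\alpha)$ still ensures the recursion produces a strictly increasing divergent sequence. (Also, condition~(2) asks for the strict inequality $\eta_d>1/\alpha$; since $\eta_k\to\infty$ this is immediate, but your ``$\geq$'' should be ``$>$''.)
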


\begin{enumerate}
\item $p^{\eta_{1}}<c_{0}p$ for all primes $p>\mathcal{N}$
\item $\eta_{d}>\nicefrac{1}{\alpha}$
\end{enumerate}
so that for every $i\in\left\{ 1,\ldots,d-1\right\} $ and every $p^{\eta_{i}}\leq\ell_{1}<\ell_{2}\leq p^{\eta_{i+1}}$
satisfying $p\nmid\ell_{1},\ell_{2}$, the sets $\mathcal{J}_{p,\left[\ell_{1}\right]}$
and $\mathcal{J}_{p,\left[\ell_{2}\right]}$ are disjoint.
\begin{proof}
Fix $\alpha\in\left(0,\nicefrac{1}{2}\right)$. We prove Corollary
\ref{cor: disjoint} by iterating Lemma \ref{lem:disjoint}, setting
\[
\eta_{i+1}=\frac{2}{1+\alpha}\left(\eta_{i}-\nicefrac{\alpha}{2}\right)\text{ for }i\geq1
\]
one can explicitly compute 
\[
\eta_{i+1}=\left(\frac{2}{1+\alpha}\right)^{i}\left(\eta_{1}-\frac{\alpha}{1-\alpha}\right)+\frac{\alpha}{1-\alpha},\quad i\geq1.
\]
Since $\alpha<\nicefrac{1}{2}$ we may take $\eta_{1}\in\left(\frac{\alpha}{1-\alpha},1\right)$. We get
as a consequence that the sequence $\left\{ \eta_{i}\right\} $ is
monotonically increasing and clearly property $\left(1\right)$ is
satisfied for all primes $p>\mathcal{N}$, given that $\mathcal{N}$ is sufficiently large. Since $\left\{ \eta_{i}\right\} $
grows, roughly, as a geometric sequence we can deduce there exists $d$
such that $\eta_{d}>\nicefrac{1}{\alpha}$. 
\end{proof}

\begin{proof}[Proof of Lemma \ref{lem:main}, $\alpha\in\left(0,\nicefrac{1}{2}\right)$]
In order to get an estimate on the contribution of the sets $\mathcal{J}_{p,\left[\ell\right]}$
when $\ell$ varies through $A$, we write 
\[
A=\bigcup_{i=1}^{d-1}A_{p,i},
\]
where 
\[
A_{p,i}=\left\{ \ceiling{p^{\eta_{i}}}\leq\ell<\floor{p^{\eta_{i+1}}}\text{\,|\,}p\nmid\ell\right\} .
\]
By Corollary \ref{cor: disjoint}
\[
\intop_{\underset{\ell\in A_{p,i}}{\bigcup}\mathcal{J}_{p,\left[\ell\right]}}\left|Q_{N_{p}}\left(\tau\right)\right|^{2}d\tau\leq\intop_{\bbT}\left|Q_{N_{p}}\left(\tau\right)\right|^{2}d\tau=1
\]
Taking into consideration the change of variables $\left(\ref{eq:Var change}\right)$
we deduce 
\begin{equation}
\sum_{\ell\in A}\intop_{J_{\left[\ell\right]}}\left|Q_{N_{p}}\left(pt\right)\right|^{2}dt\leq\frac{1}{p}\sum_{i=1}^{d-1}\intop_{\underset{\ell\in A_{p,i}}{\bigcup}\mathcal{J}_{p,\left[\ell\right]}}\left|Q_{N_{p}}\left(\tau\right)\right|^{2}d\tau\leq\frac{d-1}{p}=o\left(1\right)\text{ as } p\rightarrow\infty\label{eq:estimate 2}
\end{equation}
\end{proof}

When $\alpha\in\left[\nicefrac{1}{2},1\right)$, we need the following.
\begin{defn*}
	Given $\rho\in\left(0,1\right)$, for every $x\in\left[0,1\right]$
	and $N\in\bbN$ we denote by $M_{\rho}\left(x,N\right)$ the function
	that counts the number of pairs of integers $\left(m,n\right)$ satisfying
	\begin{equation}
	\left|x-\frac{m}{n}\right|<\frac{1}{n^{1+\rho}}\label{eq:dioph}
	\end{equation}
	where $1\leq n\leq N$, $1\leq m<n$ and $\gcd\left(m,n\right)=1$. 
\end{defn*}

The counting function $M_{\rho}$ will help us estimate the overlap of the sets $\left\{ \mathcal{J}_{p,\left[\ell\right]}\right\} _{\ell\in A}$ as explained above.

\begin{lem}
\label{lem:counting fn}Let $\rho\in\left(0,1\right)$. Then there
exists $C>0$ such that 
\[
M_{\rho}\left(x,N\right)\leq CN^{1-\rho},\quad x\in\left[0,1\right].
\]
\end{lem}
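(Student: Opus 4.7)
The plan is to exploit the elementary fact that two distinct reduced fractions with bounded denominators cannot be too close to each other, and to combine this with a dyadic decomposition of the range of $n$.

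First I would fix $x\in[0,1]$ and partition the admissible denominators into dyadic blocks $n\in[2^{k},2^{k+1})$ for $k=0,1,\ldots,\lceil\log_{2}N\rceil$. For a single pair of reduced fractions $m_{1}/n_{1}\neq m_{2}/n_{2}$ with denominators in a fixed block $[N_{0},2N_{0})$, one has the standard separation bound
\[
\left|\frac{m_{1}}{n_{1}}-\frac{m_{2}}{n_{2}}\right|\geq\frac{1}{n_{1}n_{2}}\geq\frac{1}{4N_{0}^{2}}.
\]
On the other hand, if both fractions satisfy $(\ref{eq:dioph})$ with $n_{1},n_{2}\in[N_{0},2N_{0})$, then both lie within distance $1/N_{0}^{1+\rho}$ of $x$, so any two of them lie within distance $2/N_{0}^{1+\rho}$ of each other.

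Combining the two inequalities, the number of reduced fractions $m/n$ with $n\in[N_{0},2N_{0})$ satisfying $(\ref{eq:dioph})$ is at most
\[
\frac{2/N_{0}^{1+\rho}}{1/(4N_{0}^{2})}+1\leq 8N_{0}^{1-\rho}+1.
\]
It remains to sum this estimate over the dyadic scales $N_{0}=2^{k}$, $k=0,\ldots,\lceil\log_{2}N\rceil$. Since $\rho<1$, the geometric series with ratio $2^{1-\rho}>1$ is dominated by its largest term, giving
\[
M_{\rho}(x,N)\leq\sum_{k=0}^{\lceil\log_{2}N\rceil}\bigl(8\cdot 2^{k(1-\rho)}+1\bigr)\leq C\,N^{1-\rho}
\]
for an absolute constant $C=C(\rho)$.

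There is no real obstacle here; the only point that requires a bit of care is ensuring the $+1$ terms (coming from the ceiling in the number of well-spaced points) do not spoil the bound, which is automatic since there are only $O(\log N)$ dyadic scales and this contribution is absorbed into $N^{1-\rho}$ (adjusting the constant). The uniformity of the bound in $x$ is manifest, since the separation argument made no use of the location of $x$.
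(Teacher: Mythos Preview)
Your proof is correct and follows essentially the same approach as the paper: a dyadic decomposition of the range of denominators combined with the standard separation bound $|m_1/n_1 - m_2/n_2|\geq 1/(n_1 n_2)$ for distinct reduced fractions, then a geometric summation over scales. The paper indexes the dyadic blocks from the top down as $[2^{-k}N,2^{-k+1}N]$ rather than from the bottom up, and omits the ``$+1$'' correction you were careful to track, but otherwise the argument is identical.
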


\begin{proof}
Fix $x\in\left[0,1\right]$. Note that for any two fractions $\frac{m_{1}}{n_{1}},\frac{m_{2}}{n_{2}}$
where $2^{-k}N\leq n_{j}\leq2^{-k+1}N$, $1\leq m_{j}<n_{j}$, $\gcd\left(m_{j},n_{j}\right)=1$
for $j=1,2$ and $k\geq1$ we have 
\[
\left|\frac{m_{1}}{n_{1}}-\frac{m_{2}}{n_{2}}\right|=\frac{\left|m_{1}n_{2}-n_{1}m_{2}\right|}{n_{1}n_{2}}\geq\frac{2^{2k}}{N^{2}}.
\]
Hence the maximal number of pairs $\left(m,n\right)$ with $2^{-k}N\leq n\leq2^{-k+1}N$
and which satisfy (\ref{eq:dioph}) is at most 
\[
\frac{\frac{2\cdot2^{k\left(1+\rho\right)}}{N^{1+\rho}}}{\frac{2^{2k}}{N^{2}}}=2\cdot2^{k\left(1+\rho-2\right)}N^{1-\rho}.
\]
When summing over all dyadic intervals the lemma follows.
\end{proof}
We are now ready to finish the proof of Lemma \ref{lem:main}.

\begin{proof}[Proof of Lemma \ref{lem:main}, $\alpha\in\left[\nicefrac{1}{2},1\right)$]
Going back to the sum from (\ref{eq:Var change})
\[
\frac{1}{p}\sum_{\ell\in A}\intop_{\mathcal{J}_{p,\left[\ell\right]}}\left|Q_{N_{p}}\left(\tau\right)\right|^{2}d\tau,
\]
we wish to apply the estimate from Lemma \ref{lem:counting fn}. Since
$\tau\in\mathcal{J}_{p,\left[\ell\right]}$ if and only if there exists
$1\leq j<\ell$ with $\gcd\left(j,\ell\right)=1$ such that 
\[
\left|\frac{\tau}{2\pi}-\frac{j}{\ell}\right|<\frac{p\delta\left(\ell\right)}{\ell}=\frac{c_{0}p}{\ell^{1+\nicefrac{1}{\alpha}}}\leq\frac{\ell}{\ell^{1+\nicefrac{1}{\alpha}}}=\frac{1}{\ell^{1+\left(\nicefrac{1}{\alpha}-1\right)}}
\]
and that last inequality holds for all $\ell\in A$. Lemma \ref{lem:counting fn} now implies
\[
\frac{1}{p}\sum_{\ell\in A}\intop_{\mathcal{J}_{p,\left[\ell\right]}}\left|Q_{N_{p}}\left(\tau\right)\right|^{2}d\tau\leq\frac{1}{p}\intop_{\bbT}\left|Q_{N_{p}}\left(\tau\right)\right|^{2}M_{\nicefrac{1}{\alpha}-1}\left(\frac{\tau}{2\pi},N_{p}\right)d\tau\leq
\]
\begin{equation}
\leq\frac{1}{p}CN_{p}^{1-\left(\nicefrac{1}{\alpha}-1\right)}=\frac{1}{p}CN_{p}^{2-\nicefrac{1}{\alpha}}\label{eq: estimate 3}
\end{equation}
Observe that 
\[
\frac{1}{p}CN_{p}^{2-\nicefrac{1}{\alpha}}=Cp^{\nicefrac{1}{\alpha}\left(2-\nicefrac{1}{\alpha}\right)-1}=o\left(1\right)\iff0<\alpha^{2}-2\alpha+1=\left(\alpha-1\right)^{2}
\]
which hold for $\alpha\in\left[\nicefrac{1}{2},1\right)$.
\end{proof}

\begin{rem*}
It is evident that the bound attained from Lemma \ref{lem:counting fn}
can be used when $\alpha\in\left(0,\nicefrac{1}{2}\right)$, but what
we have actually proved for this case is that the counting function
is uniformly bounded in both $N$ and $x$.
\end{rem*}

The remaining range, $\ell\geq N_{p}$, will be broken down to two parts. We write 
\[
\intop_{J_{\left[\ell\right]}}\left|Q_{N_{p}}\left(pt\right)\right|^{2}dt\leq\intop_{I_{\left[\ell\right]}}\left|Q_{N_{p}}\left(pt\right)\right|^{2}dt=\sum_{k,k^{\prime}=1}^{N_{p}}a\left(k\right)\overline{a\left(k^{\prime}\right)}\intop_{\bbT}f_{\ell}\left(t\right)e^{-ip\left(k^{\prime}-k\right)t}dt=
\]
\[
=\left|I_{\left[\ell\right]}\right|\sum_{k=1}^{N_{p}}\left|a\left(k\right)\right|^{2}+\sum_{\underset{k\neq k^{\prime}}{k,k^{\prime}=1}}^{N_{p}}a\left(k\right)\overline{a\left(k^{\prime}\right)}\widehat{f_{\ell}}\left(p\left(k^{\prime}-k\right)\right)=
\]
\begin{equation}
=\left|I_{\left[\ell\right]}\right|+\sum_{\underset{k\neq k^{\prime}}{k,k^{\prime}=1}}^{N_{p}}a\left(k\right)\overline{a\left(k^{\prime}\right)}\widehat{f_{\ell}}\left(p\left(k^{\prime}-k\right)\right).\label{eq:open brackets}
\end{equation}

\begin{lem}\label{lem:eqality}
	Given $\alpha$, for any prime $p$ we have 
	\begin{itemize}
		\item
		$\displaystyle \sum_{\underset{p\nmid\ell}{\ell=N_{p}+1}}^{pN_{p}-1}\intop_{I_{\left[\ell\right]}}\left|Q_{N_{p}}\left(pt\right)\right|^{2} dt=\sum_{\underset{p\nmid\ell}{\ell=N_{p}+1}}^{pN_{p}-1}\left|I_{\left[\ell\right]}\right|$
		\item  $\displaystyle\sum_{\ell=pN_{p}}^{\infty}\intop_{I_{\left[\ell\right]}}\left|Q_{N_{p}}\left(pt\right)\right|^{2}dt=\sum_{\ell=pN_{p}}^{\infty}\left|I_{\left[\ell\right]}\right|$
	\end{itemize}
\end{lem}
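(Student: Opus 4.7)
The plan is to note that both equalities reduce to showing that the cross terms in the expansion (\ref{eq:open brackets}) vanish for the appropriate ranges of $\ell$. Recall that we start from
\[
\intop_{I_{\left[\ell\right]}}\left|Q_{N_{p}}\left(pt\right)\right|^{2}dt=\left|I_{\left[\ell\right]}\right|+\sum_{\underset{k\neq k^{\prime}}{k,k^{\prime}=1}}^{N_{p}}a\left(k\right)\overline{a\left(k^{\prime}\right)}\widehat{f_{\ell}}\left(p\left(k^{\prime}-k\right)\right),
\]
so the first bullet reduces to showing that $\widehat{f_\ell}(p(k'-k)) = 0$ for all admissible $(k,k')$ when $N_p+1 \le \ell \le pN_p-1$ and $p\nmid \ell$, and similarly for the second bullet when $\ell \ge pN_p$. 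The critical input is the support property (\ref{eq:nonzero coeff}): $\widehat{f_\ell}(n)\ne 0$ forces $\ell \mid n$. So in both cases I need to argue that $\ell \nmid p(k'-k)$ whenever $1 \le k \ne k' \le N_p$.

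For the first bullet, the primality of $p$ together with $p\nmid \ell$ yields $\gcd(\ell,p)=1$, so $\ell \mid p(k'-k)$ would force $\ell \mid (k'-k)$. But $0<|k'-k|\le N_p-1 < \ell$, so this is impossible, and all cross terms vanish. Summing the resulting identities $\int_{I_{[\ell]}}|Q_{N_p}(pt)|^2\,dt = |I_{[\ell]}|$ over the stated range of $\ell$ gives the first equality. For the second bullet, we argue directly without needing to consider $\gcd(\ell,p)$: since $|p(k'-k)| \le p(N_p-1) < pN_p \le \ell$, the only way $\ell \mid p(k'-k)$ can hold is $p(k'-k)=0$, which is excluded by $k\ne k'$. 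Again the cross terms all vanish, and summing over $\ell \ge pN_p$ gives the second equality.

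I do not foresee any real obstacle here; the lemma is essentially a bookkeeping consequence of (\ref{eq:nonzero coeff}) combined with elementary divisibility bounds. The only subtlety worth flagging is that the first equality genuinely requires the hypothesis $p\nmid\ell$ (otherwise $\gcd(\ell,p)$ could be as large as $p$ and the divisibility argument breaks), whereas the second equality holds unconditionally because the bound $|p(k'-k)|<\ell$ is strict for every $k\ne k'$.
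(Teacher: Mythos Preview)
Your proposal is correct and follows essentially the same approach as the paper: both use the expansion \eqref{eq:open brackets} together with the support property \eqref{eq:nonzero coeff}, then eliminate the cross terms by the identical divisibility arguments (for the first bullet, $\gcd(\ell,p)=1$ forces $\ell\mid(k'-k)$, contradicting $|k'-k|<\ell$; for the second, $|p(k'-k)|<pN_p\le\ell$). The only cosmetic difference is that the paper treats the second bullet first and phrases the first-bullet argument via $m\ell=p(k'-k)\Rightarrow p\mid m$, which is equivalent to your $\gcd$ formulation.
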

\begin{proof}
Starting with the second sum, we notice that if $\ell\geq pN_{p}$, in the second term of $\left(\ref{eq:open brackets}\right)$ we have $k$ and $k^{\prime}$ in
$\left\{ 1,\ldots,N_{p}\right\}$ and $k\neq k^{\prime}$,
hence 
\[
p\left(k^{\prime}-k\right)\in\left\{ p,2p,\ldots,\left(N_{p}-1\right)p\right\} ,
\]
which, by $\left(\ref{eq:nonzero coeff}\right)$, implies that $\widehat{f_{\ell}}\left(p\left(k^{\prime}-k\right)\right)=0$
and so 
\begin{equation}
\sum_{\ell=pN_{p}}^{\infty}\intop_{I_{\left[\ell\right]}}\left|Q_{N_{p}}\left(pt\right)\right|^{2}dt=\sum_{\ell=pN_{p}}^{\infty}\left|I_{\left[\ell\right]}\right|.\label{eq:estimate 4}
\end{equation}
This also applies when $N_{p}<\ell<pN_{p}$ and $p\nmid\ell$. Indeed,
assuming without loss of generality that $k^{\prime}>k$, we have
$\widehat{f_{\ell}}\left(p\left(k^{\prime}-k\right)\right)\neq0$
only if 
\[
m\ell=p\left(k^{\prime}-k\right)
\]
with some $m\in\bbN$. Since $p$ does not divide $\ell$ it must
divide $m$, in which case $m=pm^{\prime}$ with some $m^{\prime}\in\bbN$,
and 
\[
m^{\prime}\ell=k^{\prime}-k.
\]
But this is not possible since $m^{\prime}\ell\geq\ell>N_{p}$ while
$k^{\prime}-k<N_{p}$. It follows that 
\begin{equation}
\sum_{\underset{p\nmid\ell}{\ell=N_{p}+1}}^{pN_{p}-1}\intop_{I_{\left[\ell\right]}}\left|Q_{N_{p}}\left(pt\right)\right|^{2}dt=\sum_{\underset{p\nmid\ell}{\ell=N_{p}+1}}^{pN_{p}-1}\left|I_{\left[\ell\right]}\right|\label{eq:estimes 5}
\end{equation}
as required.
\end{proof}
Last, the triangle inequality and Cauchy-Schwarz inequality give the  estimate 
\begin{equation}
\intop_{E}\left|\sum_{k=1}^{N}a\left(k\right)e^{i\lambda_{k}t}\right|^{2}dt\leq\left|E\right|\left(\sum_{k=1}^{N}\left|a\left(k\right)\right|\right)^{2}\leq\left|E\right|N\sum_{k=1}^{N}\left|a\left(k\right)\right|^{2},\label{eq:CS}
\end{equation}
which holds for any measurable set $E$, any $N\in\bbN$, sequence
of scalars $\left\{ a\left(k\right)\right\} _{k=1}^{N}$ and sequence
of real numbers $\left\{ \lambda_{k}\right\} _{k=1}^{N}$.
\begin{cor}
	Given $\alpha$, for any prime number $p$ we have 
	\[
	\sum_{\underset{p|\ell}{\ell=p}}^{pN_{p}}\intop_{J_{\left[\ell\right]}}\left|Q_{N_{p}}\left(pt\right)\right|^{2}dt< c_{0}.
	\]
\end{cor}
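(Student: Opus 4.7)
The remaining range consists of $\ell=pm$, $m\in\{1,\ldots,N_p\}$, i.e., the multiples of $p$ up to $pN_p$. The first thing to note is why the vanishing-Fourier-coefficient argument of Lemma~\ref{lem:eqality} breaks down here: whenever $m$ divides $k'-k$ we have $pm\mid p(k'-k)$, so by $(\ref{eq:nonzero coeff})$ the coefficient $\widehat{f_{pm}}(p(k'-k))$ is in general nonzero, and the expansion $(\ref{eq:open brackets})$ does not collapse to $|I_{[pm]}|$ plus a vanishing remainder.

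My plan is to bypass this obstruction by invoking the crude Cauchy--Schwarz estimate $(\ref{eq:CS})$ with $E=J_{[pm]}$, frequencies $\lambda_k=kp$, and coefficients $a(k)$ (recall $\sum_k|a(k)|^2=1$). This immediately yields
\[
\intop_{J_{[pm]}}|Q_{N_p}(pt)|^2\,dt\leq N_p\,|J_{[pm]}|.
\]
Bounding $|J_{[pm]}|\leq|I_{[pm]}|=2\delta(pm)=2c_0/(pm)^{1/\alpha}$ and summing over $m=1,\ldots,N_p$ gives
\[
\sum_{m=1}^{N_p}\intop_{J_{[pm]}}|Q_{N_p}(pt)|^2\,dt\leq\frac{2c_0\,N_p}{p^{1/\alpha}}\sum_{m=1}^{N_p}\frac{1}{m^{1/\alpha}}.
\]
Since $N_p=\floor{p^{1/\alpha}}\leq p^{1/\alpha}$ and, because $\alpha\in(0,1)$, $1/\alpha>1$ makes $\sum_m m^{-1/\alpha}$ a convergent $\zeta$-series, the right-hand side is at most $C(\alpha)\,c_0$ for some constant depending only on $\alpha$.

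The main obstacle will be sharpening this $O(c_0)$ bound to strictly below $c_0$, as stated. Since $c_0=c_0(\alpha,\eps)$ is a free parameter constrained only by $\sum_\ell 2\delta(\ell)<\eps$, one clean option is simply to absorb the factor $2\zeta(1/\alpha)$ into a tighter choice of $c_0$ at the outset; a more economical route would exploit the multiplicative identity $|J_{[pm]}|=2\delta(pm)\,\phi(pm)/(pm)$ together with $\sum_m\phi(m)\,m^{-(1+1/\alpha)}=\zeta(1/\alpha)/\zeta(1+1/\alpha)$ to obtain a smaller explicit constant. Either way, the contribution of the multiples of $p$ is of order $c_0$, which, combined with $(\ref{eq:estimate 1})$, Lemma~\ref{lem:main}, and Lemma~\ref{lem:eqality}, keeps $\sum_\ell\intop_{J_{[\ell]}}|Q_{N_p}(pt)|^2\,dt$ uniformly bounded away from $1$ for all sufficiently large primes $p$, thereby producing the required Riesz lower bound $\gamma$.
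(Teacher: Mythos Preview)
Your argument is exactly the paper's: bound $J_{[\ell]}\subset I_{[\ell]}$, apply the crude estimate $(\ref{eq:CS})$, substitute $\ell=jp$, and sum over $j=1,\ldots,N_p$. Your concern about the constant is in fact justified --- the paper's displayed chain drops a factor of $2$ in $|I_{[jp]}|=2\delta(jp)$ and then tacitly uses $\sum_{j\ge1}j^{-1/\alpha}<1$ (which is false) to arrive at $<c_0$, so the honest outcome of either computation is $\le 2\zeta(1/\alpha)\,c_0$; this is harmless for Lemma~\ref{lem:main lem}, since $c_0=c_0(\alpha,\eps)$ is chosen after $\alpha$ and the final assembly only needs an $O(\eps)$ contribution, exactly as you propose to handle it.
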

\begin{proof}
For $p\leq\ell\le pN_{p}$ and $p|\ell$, applying (\ref{eq:CS}) we get
\[
	\sum_{\underset{p|\ell}{\ell=p}}^{pN_{p}}\intop_{J_{\left[\ell\right]}}\left|Q_{N_{p}}\left(pt\right)\right|^{2}dt\leq
\sum_{\underset{p|\ell}{\ell=p}}^{pN_{p}}\intop_{I_{\left[\ell\right]}}\left|Q_{N_{p}}\left(pt\right)\right|^{2}dt=
\]
\[
=\sum_{j=1}^{N_{p}}\intop_{I_{\left[jp\right]}}\left|Q_{N_{p}}\left(pt\right)\right|^{2}dt\leq N_{p}\sum_{j=1}^{N_{p}}\left|I_{\left[jp\right]}\right|=
\]
\begin{equation}
=N_{p}\sum_{j=1}^{N_{p}}\delta\left(jp\right)=c_{0}N_{p}\sum_{j=1}^{N_{p}}\frac{1}{\left(jp\right)^{\nicefrac{1}{\alpha}}}<c_{0}\frac{N_{p}}{p^{\nicefrac{1}{\alpha}}}\leq c_{0}.\label{eq:estimate 6}
\end{equation}
\end{proof}
We are now in a position to finish the proof of Lemma \ref{lem:main lem}.
\begin{proof}[Proof of Lemma \ref{lem:main lem}]
We gather all our estimates (\ref{eq:estimate 1}), (\ref{eq:estimate 2}),
(\ref{eq: estimate 3}), (\ref{eq:estimate 4}), (\ref{eq:estimes 5})
and (\ref{eq:estimate 6}), which hold for all primes $p>\mathcal{N}$,
we finally have
\[
\intop_{\bbS_{\alpha}}\left|Q_{N_{p}}\left(pt\right)\right|^{2}dt\geq1-\sum_{\ell}\intop_{J_{\left[\ell\right]}}\left|Q_{N_{p}}\left(pt\right)\right|^{2}dt\geq
\]
\[
\geq1-\left(\sum_{\ell}\left|I_{\left[\ell\right]}\right|+3c_{0}+o\left(1\right)\right)\geq
\]
\[
\geq1-\left(\eps+3c_{0}+o\left(1\right)\right)>1-4\eps+o\left(1\right)\text{ as } p\rightarrow \infty
\]
Therefore given $\alpha$ and $\eps$, if we choose $\mathcal{N}$ sufficiently large, we can find some positive constant
$\gamma$ which is a lower Riesz bound for $E\left(B_{p}^{\alpha}\right)$ in $L^{2}\left(\bbS_{\alpha}\right)$, for all primes $p>\mathcal{N}$. By Corollary \ref{cor:uniting blocks}, we indeed get that $\bbS_{\alpha}\in\mathcal{A}(\alpha)$.
\end{proof}

\section{The multiplicity function}

In this section we prove a structural sufficient condition to be a
member of the class $\mathcal{A\left(\alpha\right)}$. Given $\alpha\in(0,1)$ and $\ell,N\in\bbN$, we again use an $L^2$-normalized polynomial
\[
Q_{N}\left(t\right)=\sum_{k=1}^{N}a\left(k\right)e^{ikt}\quad,\quad\sum_{k=1}^{N}\left|a\left(k\right)\right|^{2}=1
\]
so that $Q_{N}\left(\ell t\right)$ has spectrum in $\{\ell,2\ell,\ldots,N\ell\}$.

\begin{proof}[Proof of Theorem 4, the case $\alpha>0$]
Fix $\alpha\in\left(0,1\right]$, $c,\delta\in\left(0,1\right)$. Let  $\mathcal{L}$ be a set of all positive integers for which (\ref{eq:mult}) holds, and let $\ell\in\mathcal{L}$. We set $N=\floor{\ell^{\nicefrac{1}{\alpha}}}$ and notice that 
\[
\intop_{\bbS}\left|Q_{N}\left(\ell t\right)\right|^{2}dt=\intop_{\left[0,\frac{2\pi}{\ell}\right)}\left|Q_{N}\left(\ell t\right)\right|^{2}\nu_{\ell,\bbS}\left(t\right)dt=
\]
making a change of variables we get	
\[
=\intop_{\bbT}\left|Q_{N}\left(\tau\right)\right|^{2}\nu_{\ell,\bbS}\left(\nicefrac{\tau}{\ell}\right)\frac{d\tau}{\ell}\geq\delta\intop_{\left\{ \nu_{\ell,\bbS}\left(\nicefrac{\tau}{\ell}\right)\geq\delta\ell\right\} }\left|Q_{N}\left(\tau\right)\right|^{2}d\tau,
\]
clearly we can write 
\[
\intop_{\left\{ \nu_{\ell,\bbS}\left(\nicefrac{\tau}{\ell}\right)\geq\delta\ell\right\} }\left|Q_{N}\left(\tau\right)\right|^{2}d\tau=1-\intop_{\left\{ \nu_{\ell,\bbS}\left(\nicefrac{\tau}{\ell}\right)<\delta\ell\right\} }\left|Q_{N}\left(\tau\right)\right|^{2}d\tau,
\]
applying the estimate (\ref{eq:CS}) we can bound the last term 
\[
\intop_{\left\{ \nu_{\ell,\bbS}\left(\nicefrac{\tau}{\ell}\right)<\delta\ell\right\} }\left|Q_{N}\left(\tau\right)\right|^{2}d\tau\leq N\left|\left\{ \nu_{\ell,\bbS}\left(\nicefrac{\tau}{\ell}\right)<\delta\ell\right\} \right|
\]
Plugging in the assumption (\ref{eq:mult}), we get
\[
\intop_{\bbS}\left|Q_{N}\left(\ell t\right)\right|^{2}dt\geq\delta\left(1-N\left|\left\{ \nu_{\ell,\bbS}\left(\nicefrac{\tau}{\ell}\right)<\delta\ell\right\} \right|\right)\geq\delta\left(1-c\right)
\]
This implies that for all $\ell\in\mathcal{L}$, the exponential system $E\left(\{\ell,2\ell,\ldots,N\ell\}\right)$ has lower Riesz bound $\delta\left(1-c\right)$. Corollary \ref{cor:uniting blocks} now gives $\bbS\in\mathcal{A}\left(\alpha\right)$.
\end{proof}
For the class $\mathcal{A}\left(0\right)$ we have a complete characterization in terms of the multiplicity function.
\begin{lem}\label{lem:shiftinv}
Given $\ell\in\bbN$, the exponential system $E\left(\ell\bbZ\right)$
is a Riesz sequence in $L^{2}\left(\bbS\right)$ if and only if 
\begin{equation}
\left|\left\{ t\in\bbT\,|\,\nu_{\ell,\bbS}\left(\nicefrac{t}{\ell}\right)=0\right\} \right|=0.\label{eq:mult0}
\end{equation}
\end{lem}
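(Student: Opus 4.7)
My plan is to reduce the Riesz-sequence condition to a weighted $L^2$ inequality whose weight is exactly $\nu_{\ell,\bbS}(\cdot/\ell)$, and then exploit the fact that this weight is bounded and integer-valued.

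\textbf{Step 1 (Key identity).} Given a finite sum $f(t)=\sum_{n} a_n e^{i\ell n t}$, the function $f$ is $2\pi/\ell$-periodic. Splitting $[0,2\pi)$ into the $\ell$ intervals of length $2\pi/\ell$ and translating each back into the fundamental domain $[0,2\pi/\ell)$, one obtains the periodization identity
\[
\intop_{\bbS}|f(t)|^{2}dt=\intop_{0}^{2\pi/\ell}|f(t)|^{2}\,\nu_{\ell,\bbS}(t)\,dt.
\]
Substituting $\tau=\ell t$ and setting $g(\tau)=f(\tau/\ell)=\sum_n a_n e^{in\tau}$, this becomes
\[
\intop_{\bbS}|f(t)|^{2}dt=\frac{1}{\ell}\intop_{\bbT}|g(\tau)|^{2}\,w(\tau)\,d\tau,\qquad w(\tau):=\nu_{\ell,\bbS}(\tau/\ell).
\]
Since $\|g\|_{L^2(\bbT)}^2=2\pi\sum_n|a_n|^2$, the Riesz-sequence condition for $E(\ell\bbZ)$ in $L^2(\bbS)$ is equivalent to the existence of constants $0<c\leq C$ such that
\[
c\intop_{\bbT}|g(\tau)|^{2}d\tau\leq\intop_{\bbT}|g(\tau)|^{2}w(\tau)d\tau\leq C\intop_{\bbT}|g(\tau)|^{2}d\tau
\]
for every trigonometric polynomial $g$.

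\textbf{Step 2 (From trig polynomials to a pointwise bound on $w$).} The weight $w$ is bounded: it takes values in $\{0,1,\ldots,\ell\}$. Therefore the bilinear form $\int |g|^2 w$ is continuous on $L^2(\bbT)$, and since trigonometric polynomials are dense in $L^2(\bbT)$, the two-sided weighted inequality for all trig polynomials extends to all $g\in L^{2}(\bbT)$. Testing against characteristic functions $g=\mathbf{1}_{F}$ of measurable sets $F\subset\bbT$, one deduces that the lower bound $\intop_{F}w\geq c|F|$ holds for every $F$, which forces $w\geq c$ almost everywhere. Conversely, if $w\geq c$ a.e. the inequality is immediate.

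\textbf{Step 3 (Integer-valued conclusion).} Because $w$ takes only integer values, the pointwise bound $w\geq c$ on a set of full measure for \emph{some} $c>0$ is equivalent to $w\geq 1$ a.e., which is equivalent to the assertion $\bigl|\{\tau\in\bbT\,:\,w(\tau)=0\}\bigr|=0$, i.e. condition (\ref{eq:mult0}). This yields the necessity direction. For sufficiency, condition (\ref{eq:mult0}) gives $w\geq 1$ a.e., hence $\intop|g|^{2}w\geq\intop|g|^{2}$, which produces the lower Riesz bound $A=2\pi/\ell$; the upper bound $B\leq 2\pi$ is automatic from $w\leq\ell$ (or simply from $\bbS\subset\bbT$).

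\textbf{Main obstacle.} The only genuine point to handle with care is the passage from the weighted inequality holding for trigonometric polynomials to the pointwise bound $w\geq c$ a.e.; once the boundedness of $w$ and density of trig polynomials in $L^{2}(\bbT)$ are used to extend the inequality to $L^{2}$, the rest is a direct testing argument exploiting the integer-valued nature of the multiplicity function.
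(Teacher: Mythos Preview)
Your proof is correct and follows essentially the same approach as the paper: both rely on the periodization identity $\int_{\bbS}|f|^{2}=\int_{[0,2\pi/\ell)}|f|^{2}\nu_{\ell,\bbS}$ for $\frac{2\pi}{\ell}$-periodic $f$, and then exploit that the weight is bounded and integer-valued. The only cosmetic difference is in the necessity direction, where the paper explicitly exhibits the offending polynomials as Fourier partial sums of $\mathbf{1}_{E_{\ell}}$ (with $E_{\ell}=\{\nu_{\ell,\bbS}=0\}$), whereas you package the same idea as a density-of-trig-polynomials extension to $L^{2}(\bbT)$ followed by testing against $\mathbf{1}_{F}$.
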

\begin{rem*}
	Lemma \ref{lem:shiftinv} can also be deduced from \cite{Chr}, Theorem 7.2.3. Below we give an alternative proof.
	
\end{rem*}
\begin{proof}
Suppose $\left|\left\{ \nu_{\ell,\bbS}\left(\nicefrac{t}{\ell}\right)=0\right\} \right|=0$
for some $\ell\in\bbN$. This implies that for almost every point
$t\in\left[0,\frac{2\pi}{\ell}\right)$ one can find $j\in\left\{ 0,\ldots,\ell-1\right\} $
such that 
\begin{equation}
t+\frac{2\pi j}{\ell}\in\bbS.\label{eq:mult01}
\end{equation}
Now, take any positive integer $N$ and consider $Q_{N}\left(\ell t\right)$
which is a $\frac{2\pi}{\ell}$-periodic function and so 
\[
1=\intop_{\bbT}\left|Q_{N}\left(\ell t\right)\right|^{2}dt=\ell\intop_{\left[0,\nicefrac{2\pi}{\ell}\right)}\left|Q_{N}\left(\ell t\right)\right|^{2}dt
\]
By (\ref{eq:mult01}) we have 
\[
\intop_{\bbS}\left|Q_{N}\left(\ell t\right)\right|^{2}dt\geq\intop_{\left[0,\nicefrac{2\pi}{\ell}\right)}\left|Q_{N}\left(\ell t\right)\right|^{2}dt=\frac{1}{\ell}
\]
and so $E\left(\ell\bbZ\right)$ is a Riesz sequence in $L^{2}\left(\bbS\right)$. 

For the other direction, suppose $\left|\left\{ \nu_{\ell,\bbS}\left(\nicefrac{t}{\ell}\right)=0\right\} \right|>0$
for some $\ell\in\bbN$. This means that there exists a set $E_{\ell}\subset\left[0,\frac{2\pi}{\ell}\right)$
of positive measure such that for all $j\in\left\{ 0,\ldots,\ell-1\right\} $
we have 
\[
E_{\ell}+\frac{2\pi j}{\ell}\in\bbT\backslash\bbS.
\]
Let $\eps>0$ and find $N$ such that the polynomial 
\[
P\left(\ell t\right)=\frac{1}{\sqrt{\ell\left|E_{\ell}\right|}}\sum_{\left|k\right|\leq N}\widehat{\mathds{1}_{E_{\ell}}}\left(k\right)e^{ik\ell t}
\]
satisfies 
\[
\intop_{\bbS}\left|P\left(\ell t\right)\right|^{2}dt<\eps\,,\,\intop_{\bbT}\left|P\left(\ell t\right)\right|^{2}dt>\frac{1}{2}.
\]
This is possible due to the fact that the $L^{2}\left(\bbT\right)$
function $\sum_{k\in\bbZ}\widehat{\mathds{1}_{E_{\ell}}}\left(k\right)e^{ik\ell t}$
vanishes at almost every point of $\bbS$. Since $\eps>0$ is arbitrary
we deduce that $E\left(\ell\bbZ\right)$ is not a Riesz sequence in
$L^{2}\left(\bbS\right)$.
\end{proof}

\end{document}